\theoremstyle{plain}
\newtheorem{thm}{Theorem}[section]
\newtheorem*{KernelMagic}{Kernel Magic}
\newtheorem*{MainTheorem}{Main Theorem}
\newtheorem{lem}[thm]{Lemma}
\newtheorem{cor}[thm]{Corollary}
\theoremstyle{definition}
\newtheorem{defn}{Definition}
\newtheorem*{TheDefinition}{Definition}
\theoremstyle{remark}
\newtheorem*{observation}{Observation}
\newcommand{\fancy}[1]{\mathcal{#1}}
\newcommand{\IN}{\mathbb{N}}
\newcommand{\IR}{\mathbb{R}}
\newcommand{\D}{\fancy{D}}
\newcommand{\T}{\fancy{T}}
\newcommand{\B}{\fancy{B}}
\renewcommand{\L}{\fancy{L}}
\newcommand{\HH}{\fancy{H}}
\newcommand{\set}[1]{\left\{ #1 \right\}}
\newcommand{\card}[1]{\left|#1\right|}
\newcommand{\size}[1]{\left\Vert#1\right\Vert}
\newcommand{\func}[3]{#1\colon #2 \rightarrow #3}
\newcommand{\irange}[1]{\left[#1\right]}
\newcommand{\parens}[1]{\left( #1 \right)}
\newcommand{\DefinedAs}{\mathrel{\mathop:}=}
\newcommand{\mic}{\operatorname{mic}}
\newcommand\restr[2]{{
  \left.\kern-\nulldelimiterspace 
  #1 
  \vphantom{\big|} 
  \right|_{#2} 
  }}
\def\D{\fancy{D}}
\newcommand{\claim}[2]{{\bf Claim #1.}~{\it #2}~~}
\newcommand\numberthis{\addtocounter{equation}{1}\tag{\theequation}}
\title{A better lower bound on average degree of $k$-list-critical graphs}
\author{Landon Rabern}
\begin{document}
\maketitle

\begin{abstract}
		We improve the best known bounds on average degree of $k$-list-critical graphs for $k \ge 6$. 
		Specifically, for $k \ge 7$ we show that every non-complete $k$-list-critical graph has average degree at least $k-1 + \frac{(k-3)^2 (2 k-3)}{k^4-2 k^3-11 k^2+28 k-14}$
		and every non-complete $6$-list-critical graph has average degree at least $5 + \frac{93}{766}$.
		The same bounds hold for online $k$-list-critical graphs.
\end{abstract}

\section{Introduction}
A graph $G$ is \emph{$k$-list-critical} if $G$ is not $(k-1)$-choosable, but every
proper subgraph of $G$ is $(k-1)$-choosable.  For further definitions and notation, see \cite{OreVizing, DischargingLowerBound}. 
Table \ref{TheTable} shows some history of lower bounds on average degree of $k$-list-critical graphs.

\begin{MainTheorem}
For $k \ge 7$, every non-complete $k$-list-critical graph has average degree at least \[k-1 + \frac{(k-3)^2 (2 k-3)}{k^4-2 k^3-11 k^2+28 k-14}.\]
Every non-complete $6$-list-critical graph has average degree at least $5 + \frac{93}{766}$.
\end{MainTheorem}

The proof is similar to the $4$-list-critical case in \cite{Better4ListCriticalBound}, but now we incorporate reducibility lemmas from Kierstead and R. \cite{OreVizing}.  
Basically, we show that the average degree of the subgraph induced on vertices of degree $k-1$ is small, which implies that the number of edges incident to the vertices of degree at least $k$ must be large,
and hence the number of vertices of degree at least $k$ must be large; that is, the graph must have high average degree.  That is how all known proofs of lower bounds on average degree of $k$-list-critical graphs work.
A tight bound on the average degree of the subgraph induced on vertices of degree $k-1$ in a $k$-list-critical graph was proved by Gallai \cite{gallai1963kritische}.  The connected graphs in which each block is a complete graph
or an odd cycle are called \emph{Gallai trees}.  Gallai \cite{gallai1963kritische} proved that in a $k$-critical graph, the vertices of degree $k-1$ induce a disjoint union of Gallai trees.  
The same is true for $k$-list-critical graphs \cite{borodin1977criterion, erdos1979choosability}.  Since Gallai's bound is tight, it may appear that there is no hope of improvement using the above method.  
While it is true that the upper bound on average degree of Gallai trees cannot be improved in general, it can be improved in the absence of certain \emph{bad} properties.  
Let $G$ be a $k$-list-critical graph and let $\L$ be the subgraph of $G$ induced on vertices of degree $k-1$.
If the presence of bad properties in $\L$ could be shown to lead to reducible configurations in $G$, we would have a pathway to improvement.  Kostochka and Stiebitz \cite{kostochkastiebitzedgesincriticalgraph}
made the first progress along these lines.  Further improvements in \cite{OreVizing}, \cite{DischargingLowerBound} and \cite{Better4ListCriticalBound} follow the same general outline.  
As in \cite{DischargingLowerBound} and \cite{Better4ListCriticalBound}, it is convenient to have a measure of how bad $\L$ is.  So, if $b$ is a function measuring badness, this could be realized as
an upper bound of the form:
\[2\size{\L} \le s(k)\card{\L} + t(k)b(\L).\]
Of course, we can measure badness along multiple axes (in badness space?).  In our proof we use two badness measures $\beta(\L)$ and $q(\L)$, so the upper bound looks like:
\[2\size{\L} \le s(k)\card{L} + h(k)\beta(\L) + z(k)q(\L).\]
High $\beta(\L)$ badness leads to reducible configurations by kernel-perfect orientations and high $q(\L)$ badness leads to reducible configurations by Alon-Tarsi orientations.
That means the same proof shows that Main Theorem holds for online $k$-list-critical graphs as well (in fact, for the larger class of $OC$-irreducible graphs with $\delta(G) = k-1$ defined in section 5).

Let $c_k^*(\L)$ be the number of components of $\L$ containing a copy of $K_{k-1}$. Let $q_k(\L)$ be the number of non-cut vertices in $\L$ that appear in copies of $K_{k-1}$.  
Let $\beta_k(\L)$ be the independence number of the subgraph of $\L$ induced on the vertices of degree $k-1$.  
When $k$ is defined in context, we just write $c^*(\L)$, $q(\L)$ and $\beta(\L)$.  
The following upper bounds on $q(\L)$ and $\beta(\L)$ are likely to be reusable.  More general versions of these lemmas are stated and proved in sections 4 and 5.

\begin{lem}\label{qLemmaList}
	Let $G$ be a non-complete $k$-list-critical graph where $k \ge 5$.  Let $\L$ be the subgraph of $G$ induced on $(k-1)$-vertices, $\HH^-$ the subgraph of $G$ induced on $k$-vertices
	and $\HH^+$ the subgraph of $G$ induced on $(k+1)^+$-vertices.  Then
	\[q(\L) \le c^*(\L) + 4\card{\HH^-} + \size{\HH^+, \L},\] and if $k \ge 7$, then
	\[q(\L) \le 2c^*(\L) + 3\card{\HH^-} + \size{\HH^+, \L}.\]
\end{lem}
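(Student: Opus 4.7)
The plan is to bound $q(\L)$ by charging each counted vertex either to an edge from $\HH^+$ to $\L$, to a vertex of $\HH^-$, or to a component of $\L$ containing a copy of $K_{k-1}$. First, for $k \ge 5$ every copy of $K_{k-1}$ inside $\L$ is a block of $\L$: the blocks of a Gallai tree are complete graphs or odd cycles, and no odd cycle contains $K_{k-1}$ once $k \ge 5$. Let $v$ be a non-cut vertex of $\L$ lying in a $K_{k-1}$-block $B$. Then $v$ has $k-2$ neighbors in $B$, and since $v$ belongs to only one block of $\L$, any edge $vw$ with $w \in V(\L) \setminus V(B)$ would form a second block at $v$. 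Thus the remaining neighbor of $v$ in $G$ lies outside $\L$; call it $\psi(v) \in V(\HH^-) \cup V(\HH^+)$. Writing $X^\pm = \{v : \psi(v) \in V(\HH^\pm)\}$, we get $q(\L) = \card{X^+} + \card{X^-}$. The map $v \mapsto v\psi(v)$ injects $X^+$ into $E(\HH^+, \L)$, so $\card{X^+} \le \size{\HH^+, \L}$.

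The substantive task is bounding $\card{X^-}$. For $u \in V(\HH^-)$, set $f(u) = \card{\{v \in X^- : \psi(v) = u\}}$. I aim to show via reducibility that $f(u) \le 4$ when $k \ge 5$ (resp.\ $f(u) \le 3$ when $k \ge 7$), modulo a global pool of $c^*(\L)$ (resp.\ $2 c^*(\L)$) exceptions. An easy structural piece seeds the argument: if $u$ were adjacent to all $k-1$ vertices of some $K_{k-1}$-block $B$, then $B \cup \{u\}$ would induce a $K_k$ in $G$, which is impossible because $K_k$ is not $(k-1)$-choosable while every proper subgraph of a non-complete $k$-list-critical $G$ must be. So $u$ misses at least one vertex of every $K_{k-1}$-block it touches. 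The harder part is the general reducibility, carried out via the Kernel Lemma of \cite{OreVizing} applied to a tailored orientation on $u$ together with the $K_{k-1}$-blocks hit by $u$: each such block admits a kernel-perfect sub-orientation whose colors can be cyclically rotated around the clique to free up the color used at the external neighbor of $u$, and when enough such blocks are hit, a $(k-1)$-list coloring of a suitable proper subgraph of $G$ (available by criticality) extends to cover $u$, contradicting $k$-list-criticality.

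The main obstacle is executing this kernel argument uniformly across all ways the $K_{k-1}$-blocks meeting $u$ can sit in $\L$: distinct blocks in distinct components, distinct blocks sharing a cut vertex of $\L$, and degenerate configurations whose excess must be absorbed one-per-component into $c^*(\L)$. Once the reducibility claim is in hand, summing $f(u) \le 4$ over $u \in V(\HH^-)$ with the absorbed exceptions yields $\card{X^-} \le c^*(\L) + 4\card{\HH^-}$, and combining with $\card{X^+} \le \size{\HH^+, \L}$ gives the first inequality. For $k \ge 7$, the additional slack $k - 3 \ge 4$ in the list sizes permits a sharper kernel construction which brings the cap down from $4$ to $3$ at the cost of doubling the exception budget; verifying this refinement block-by-block is the second main obstacle.
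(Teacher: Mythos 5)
Your opening bookkeeping is sound: for $k\ge5$ every copy of $K_{k-1}$ in $\L$ is a block, each non-cut vertex $v$ of such a block has exactly one neighbour $\psi(v)$ outside $\L$, and $\card{X^+}\le\size{\HH^+,\L}$. The gap is everything after that. The per-vertex claim on which you build the rest --- $f(u)\le 4$ for each $u\in V(\HH^-)$, up to a global pool of $c^*(\L)$ exceptions --- is not something that can be established vertex by vertex, and taken literally it is false: a $k$-vertex $u$ may be adjacent to one non-cut $K_{k-1}$-vertex in each of $k-2$ distinct components of $\L$ without creating any reducible configuration, so $f(u)=k-2>4$ once $k\ge7$. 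What is actually available (this is Lemma \ref{ConfigurationTypeOneEuler}, imported from \cite{OreVizing}) is the bound $f(u)\le t_u+1$, where $t_u$ is the number of components of $\L$ that $u$ meets in a copy of $K_{k-1}$; one then needs a second, genuinely global reducibility statement (Lemmas \ref{MultipleHighConfigurationEuler} and \ref{MultipleHighConfigurationEulerLopsided}) saying that the bipartite incidence graph between $V(\HH^-)$ and the components of $\D$ contains no subgraph with minimum degrees $4$ and $2$ on the two sides (respectively $3$ and $3$ when $k\ge7$), whence a degeneracy count (Lemma \ref{DegenerateEuler}) gives $\sum_u t_u\le 3\card{\HH^-}+c^*(\L)$ (respectively $2\card{\HH^-}+2c^*(\L)$). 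Only after this global averaging does the shape $4\card{\HH^-}+c^*(\L)$ emerge; it is not a local cap with exceptions. Your proposal contains neither of these reducibility lemmas nor the averaging step that replaces your per-vertex bound.

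Moreover, the reducibility mechanism you sketch --- kernel-perfect orientations of the $K_{k-1}$-blocks with colours ``cyclically rotated'' to free a colour at $u$ --- is not developed to a checkable state, and you yourself flag the required case analysis as an unresolved obstacle. Since these reducibility facts are the entire content of the lemma (the surrounding counting is routine), the proof is not complete. The one reduction you do carry out in full ($u$ cannot be adjacent to all of a $K_{k-1}$-block, else $K_k\subseteq G$) yields only $f(u)\le k-2$, which is far too weak to give either inequality.
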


\begin{lem}\label{betaLemmaList}
	Let $G$ be a $k$-list-critical graph.  Let $\L$ be the subgraph of $G$ induced on $(k-1)$-vertices and	$\HH$ the subgraph of $G$ induced on $k^+$-vertices.  
	If $2 \le \lambda \le \frac{6(k-1)}{k}$, then
	\[\beta(\L) \le \frac{2}{\lambda}\size{\HH} + \frac{2\size{G} - (k-2)\card{G} - \parens{\frac{k}{2} + \frac{k-1}{\lambda}}\card{\HH} - 1}{k-1}.\]
\end{lem}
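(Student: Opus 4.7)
The plan is to combine Gallai's classical structure theorem for $\L$ with a weighted edge-counting argument in which $\lambda$ appears as a trade-off parameter.

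First, I set up the structural side. Since $G$ is $k$-list-critical, Gallai's theorem (in its list version) guarantees that each component of $\L$ is a Gallai tree whose blocks are cliques of order at most $k-1$ or odd cycles. I let $I$ be a maximum independent set in the subgraph of $\L$ induced by the vertices of $\L$-degree $k-1$, so that $\beta(\L) = |I|$. Such a vertex $v \in I$ has no $\HH$-neighbor in $G$, and since $I$ is independent, all $k-1$ of its $G$-neighbors lie in $\L \setminus I$; this is the key local fact driving the count.

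Second, I perform global edge accounting. The degree-sum identities
\[ 2\size{\L} = (k-1)\card{\L} - e(\L,\HH), \qquad \sum_{v\in\HH} d_G(v) = 2\size{\HH} + e(\L,\HH), \]
combined with $d_G(v)\ge k$ for $v\in\HH$, let me re-express $e(\L,\HH)$ and $\size{\L}$ in terms of the global quantities $\size{G}$, $\card{G}$, $\size{\HH}$, $\card{\HH}$ that appear in the lemma. A Gallai-forest inequality bounding $(k-1)|I| + 2\size{\L}$ from above in terms of $\card{\L}$, with at least one $-1$ unit of slack per component (contributed by the clique or odd-cycle blocks of each Gallai tree), then recovers the case $\lambda = 2$ of the lemma upon substitution, since $\beta = 2 + (k-1)/2$ gives the claimed subtraction from $\card{\HH}$.

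The third step introduces the weighted parameter. I expect to credit each $\HH$-internal edge at rate $\frac{2}{\lambda}$ against $\size{\L}$ or $\card{\HH}$: intuitively, high-degree $\HH$-vertices pay for independent $\L$-vertices they could otherwise ``protect'' via boundary edges, and the upper cap $\lambda\le\frac{6(k-1)}{k}$ should arise precisely when this credit exhausts the natural degree slack $d_G(v) \ge k$ at each $\HH$-vertex. The main obstacle will be calibrating the charging so that no vertex is overcharged while keeping the inequality tight at the Gallai-forest extremal examples (the extremal case being when $\L$ is an odd cycle and $\HH$ consists of independent $k$-vertices); once the charges balance, summing them over $V(G)$ and dividing through by $k-1$ delivers the claimed bound.
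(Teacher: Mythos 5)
There is a genuine gap: your outline never produces the inequality that actually drives this lemma, namely the ``Kernel Magic'' bound $2\size{G} \ge (k-2)\card{G} + \mic(G) + 1$ for OC-irreducible (in particular, $k$-list-critical) graphs with $\delta(G)=k-1$, where $\mic(G)$ is the maximum of $\size{I, V(G)\setminus I}$ over independent sets $I$. This is not a consequence of Gallai's structure theorem plus degree counting; it comes from a kernel-perfect orientation argument (Kierstead--R., generalizing Kostochka--Yancey), and it is exactly what turns an independent set with a large edge boundary into a lower bound on $2\size{G}$. The one correct observation you do make --- that each vertex counted by $\beta(\L)$ has all $k-1$ of its neighbors in $\L$, hence none in $\HH$ --- is used in the paper precisely to build such an independent set: the $\beta(\L)$-set unioned with a maximum independent set of $\HH$ covers at least $(k-1)\beta(\L)$ plus the $\HH$-contribution many edges. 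Your proposed substitute, a ``Gallai-forest inequality bounding $(k-1)\card{I} + 2\size{\L}$ from above in terms of $\card{\L}$,'' points the wrong way: the Gallai-type lemmas available here bound $2\size{T}$ from above by expressions containing $+z(k)\beta(T)$ on the right, which cannot be rearranged into an upper bound on $\beta(\L)$ of the required global form, and the internal block structure of $\L$ plays no role in the paper's proof of this lemma at all.

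The second missing ingredient is the actual source of $\lambda$ and of the cap $\lambda \le \frac{6(k-1)}{k}$. In the paper this comes from the independence bound $\alpha(C) \ge \frac23\card{C} - \frac13\size{C}$ for each connected component $C$ of $\HH$ (a corollary of L{\"o}wenstein et al.), which yields $k\alpha(C) \ge \parens{\frac{k}{2} + \frac{k-1}{\lambda}}\card{C} - \frac{2(k-1)}{\lambda}\size{C}$ exactly when $\frac{k-1}{\lambda} \ge \frac{k}{6}$; summing over components and feeding the result into Kernel Magic gives the lemma after solving for $\beta(\L)$. Your step 3 defers ``calibrating the charging so that no vertex is overcharged'' as the main obstacle, but that calibration \emph{is} the content of the lemma, and the degree slack $d_G(v) \ge k$ at $\HH$-vertices is not the mechanism --- one genuinely needs an independent set inside $\HH$ of guaranteed size. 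As written, the proposal does not close.
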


\begin{table}
	\begin{center}
		\begin{tabular}{|c|c|c|c|c|c|c|c|c|}
			\hline
			& Gallai \cite{gallai1963kritische}
			& KS \cite{kostochkastiebitzedgesincriticalgraph} 
			& KR \cite{OreVizing}
			& CR \cite{DischargingLowerBound}
			& R \cite{Better4ListCriticalBound}
			& Here \\
			$k$ & $d(G) \ge$ & $d(G) \ge$ & $d(G) \ge$ & $d(G) \ge$ & $d(G) \ge$ & $d(G) \ge$\\
			\hline 
			4 & 3.0769 & --- & --- & --- & \bf{3.1000} & 3.1000\\
			5 & 4.0909 & --- & 4.0984 & 4.1000 & \bf{4.1176} & 4.1176\\
			6 & 5.0909 & --- & 5.1053 & 5.1076 & 5.1153 & \bf{5.1214}\\
			7 & 6.0870 & --- & 6.1149 & 6.1192 & 6.1081 & \bf{6.1296}\\
			8 & 7.0820 & --- & 7.1128 & 7.1167 & 7.1000 & \bf{7.1260}\\
			9 & 8.0769 & 8.0838 & 8.1094 & 8.1130 & 8.0923 & \bf{8.1213}\\
			10 & 9.0722 & 9.0793 & 9.1055 & 9.1088 & 9.0853 & \bf{9.1162}\\
			15 & 14.0541 & 14.0610 & 14.0864 & 14.0884 & 14.0609 & \bf{14.0930}\\
			20 & 19.0428 & 19.0490 & 19.0719 & 19.0733 & 19.0469 & \bf{19.0762}\\
			\hline
		\end{tabular}
	\end{center}
	\caption{Lower bounds on average degree $d(G)$ of a $k$-list-critical graph $G$.}
	\label{TheTable}
\end{table}

\section{General lower bounds on average degree}
This is the counting portion of the proof, which is simpler and more general than the counting in \cite{OreVizing} and \cite{DischargingLowerBound}.
\begin{defn}
A quadruple $\parens{p,h,z,f}$ of functions from $\IN$ to $\IR$ is \emph{$r$-Gallai} if for every $k \ge r$ and Gallai tree $T \ne K_k$ with $\Delta(T) \le k-1$,
the following hold:
\begin{itemize}
\item if $K_{k-1} \subseteq T$, then $2\size{T} \le \parens{k-3 + p(k)}\card{T} + h(k)q(T) + z(k)\beta(T) + f(k)$; and
\item if $K_{k-1} \not\subseteq T$, then $2\size{T} \le \parens{k-3 + p(k)}\card{T} + z(k)\beta(T)$.
\end{itemize}
\end{defn}

\begin{thm}\label{k7}
	Let $\parens{p,h,z,f}$ be $7$-Gallai.  If $k \ge 7$ and $2 \le z(k) \le \frac{6(k-1)}{k}$, then for any non-complete $k$-list-critical graph $G$,
	\[d(G) \ge k-1 + \frac{2 - p(k) - \frac{z(k)}{k-1} + \frac{\frac{z(k)}{k-1} - (2h(k) + f(k))c^*(\L)}{\card{G}}}{k+1 + 3h(k) - p(k) - \frac{(k-2)z(k)}{2(k-1)}},\]
	where $\L$ is the subgraph of $G$ induced on $(k-1)$-vertices.
\end{thm}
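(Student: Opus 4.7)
The plan is to follow the standard template described in the introduction: push the inequality $2\size{\L} \le (k-3+p(k))\card{\L} + h(k)q(\L) + z(k)\beta(\L) + f(k)c^*(\L)$ through Lemmas \ref{qLemmaList} and \ref{betaLemmaList}, then convert the resulting bound into a lower bound on $d(G)$. Since $G$ is $k$-list-critical and non-complete (hence connected), by \cite{borodin1977criterion,erdos1979choosability} each component of $\L$ is a Gallai tree with $\Delta \le k-1$, and no component is $K_k$ (else $G = K_k$). Applying the $7$-Gallai hypothesis componentwise (using the appropriate case according to whether the component contains $K_{k-1}$) and summing yields
\[
2\size{\L} \le (k-3+p(k))\card{\L} + h(k)q(\L) + z(k)\beta(\L) + f(k)c^*(\L),
\]
with $f(k)c^*(\L)$ collecting the additive constants from the $c^*(\L)$ components containing a $K_{k-1}$. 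The degree identity $(k-1)\card{\L} = 2\size{\L} + \size{\HH,\L}$ converts this into $(2-p(k))\card{\L} \le \size{\HH,\L} + h(k)q(\L) + z(k)\beta(\L) + f(k)c^*(\L)$.

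Next, substitute the explicit bounds from Lemma \ref{qLemmaList} (the $k \ge 7$ version) and Lemma \ref{betaLemmaList} with $\lambda = z(k)$. The crucial identity that makes everything collapse is
\[
\size{\HH,\L} + 2\size{\HH} = d_G(\HH) = (d(G)-k+1)\card{G} + (k-1)\card{\HH},
\]
so the $2\size{\HH}$ produced by the $\beta$-bound fuses with $\size{\HH,\L}$ to yield a clean expression in $d(G)$, $\card{G}$, and $\card{\HH}$. Using $\card{\L}=\card{G}-\card{\HH}$ and $2\size{G}=d(G)\card{G}$ and collecting, the inequality rearranges to a form whose left-hand side is exactly the numerator of the theorem, $\bigl[2-p(k)-\tfrac{z(k)}{k-1}\bigr]\card{G} + \tfrac{z(k)}{k-1} - (2h(k)+f(k))c^*(\L)$, and whose right-hand side is a linear combination of $(d(G)-k+1)\card{G}$, $\card{\HH}$, plus the leftover terms $h(k)\size{\HH^+,\L}$ and $3h(k)\card{\HH^-}$ from the $q$-bound.

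Finally, handle the leftover $\HH$-terms: use $\size{\HH^+,\L} \le d_G(\HH^+) = (d(G)-k+1)\card{G} + (k-1)\card{\HH^+} - \card{\HH^-}$, and use the minimum degrees $k$ on $\HH^-$ and $k+1$ on $\HH^+$ to obtain $\card{\HH} \le \card{\HH^-} + 2\card{\HH^+} \le (d(G)-k+1)\card{G}$. Substituting these into the combination on the right, and then absorbing the remaining $\card{\HH}$ into $(d(G)-k+1)\card{G}$ via the same relation, collects all coefficients into the denominator $k+1+3h(k)-p(k)-\frac{(k-2)z(k)}{2(k-1)}$, which is positive by $z(k)\le 6(k-1)/k$; division gives the displayed lower bound on $d(G)$. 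The main obstacle is the algebraic bookkeeping in this final step: tracking the contributions of both lemmas through several substitutions, verifying that the $2\size{\HH}$ from the $\beta$-bound cancels correctly via the degree identities, and arranging the remaining coefficients so that absorbing $\card{\HH}$ against $(d(G)-k+1)\card{G}$ reproduces exactly the stated denominator. No new structural idea beyond the Gallai decomposition and the cited badness lemmas is required.
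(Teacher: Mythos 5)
Your proposal is correct and follows essentially the same route as the paper: apply the $7$-Gallai bound componentwise to $\L$, substitute the $k\ge 7$ inequality of Lemma \ref{qLemmaList} and Lemma \ref{betaLemmaList} with $\lambda = z(k)$, and eliminate $\size{\HH,\L}$, $\size{\HH^+,\L}$, $\card{\HH^-}$, $\card{\HH}$, $\card{\L}$ by degree counting; your degree-sum identities are just rearrangements of the paper's inequalities \eqref{eq1c}--\eqref{eq4c}, and I checked that your final coefficient collapse does reproduce the stated denominator.
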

\begin{proof}
Let $\HH^-$ the subgraph of $G$ induced on $k$-vertices, $\HH$ the subgraph of $G$ induced on $k^+$-vertices, 
$\HH^+$ the subgraph of $G$ induced on $(k+1)^+$-vertices and $\D$ the components of $\L$ containing $K_{k-1}$.
Plainly, the following bounds hold.
\begin{equation}\label{eq1c}
2\size{G} \ge k\card{G} - \card{\L}
\end{equation}
\begin{equation}\label{eq2c}
2\size{G} \ge (k+1)\card{G} - \card{\HH^-} - 2\card{\L}
\end{equation}
\begin{equation}\label{eq3c}
2\size{G} \ge k\card{\HH^-} + (k-1)\card{\L} + \size{\HH^+, \L}
\end{equation}
\begin{equation}\label{eq4c}
\size{\HH,\L} = (k-1)\card{\L} - 2\size{\L}
\end{equation}
Since $\parens{p,h,z,f}$ is $7$-Gallai,
\begin{equation}\label{eq5c}
2\size{\L} \le \parens{k-3 + p(k)}\card{\L} + f(k)\card{\D} + h(k)q(\L) + z(k)\beta(\L)
\end{equation}
By Lemma \ref{qLemmaList},
\[q(\L) \le 2\card{\D} + 3\card{\HH^-} + \size{\HH^+, \L},\]
plugging this into \eqref{eq5c} gives
\begin{equation}\label{eq6c}
2\size{\L} \le \parens{k-3 + p(k)}\card{\L} + 3h(k)\card{\HH^-} + h(k)\size{\HH^+, \L} + z(k)\beta(\L) + S_1,
\end{equation}
where
\[S_1 \DefinedAs (2h(k) + f(k))\card{\D}.\]
Now using \eqref{eq1c} and \eqref{eq6c},
\begin{align*}
	2\size{G} &= 2\size{\HH} + 2\size{\HH, \L} + 2\size{\L}\\
	&= 2\size{\HH} + 2((k-1)\card{\L} - 2\size{\L}) + 2\size{\L}\\
	&= 2\size{\HH} + 2(k-1)\card{\L} - 2\size{\L}\\
	&\ge 2\size{\HH} + \parens{k+1 - p(k)}\card{\L} - 3h(k)\card{\HH^-} - h(k)\size{\HH^+, \L} - z(k)\beta(\L) - S_1\numberthis \label{eq7c}\\
\end{align*}
Adding $h(k)$ times \eqref{eq3c} to \eqref{eq7c} gives
\begin{equation}\label{eq8c}
2\size{G} \ge \frac{2\size{\HH} + \parens{k+1 +(k-1)h(k)- p(k)}\card{\L} + (k- 3)h(k)\card{\HH^-}  - z(k)\beta(\L) - S_1}{1 + h(k)}
\end{equation}
Lemma \ref{betaLemmaList} gives
\[\beta(\L) \le \frac{2}{z(k)}\size{\HH} + \frac{2\size{G} - (k-2)\card{G} - \parens{\frac{k}{2} + \frac{k-1}{z(k)}}\card{\HH} - 1}{k-1}.\]
Plugging this into \eqref{eq8c} yields
\begin{equation}\label{eq9c}
2\size{G} \ge \frac{\parens{k+1 +(k-1)h(k)- p(k)}\card{\L} + (k- 3)h(k)\card{\HH^-} +\frac{(k-2)z(k)}{k-1}\card{G} + \parens{\frac{kz(k)}{2(k-1)} + 1}\card{\HH} + S_2}{1 + h(k) + \frac{z(k)}{k-1}},
\end{equation}
where
\[S_2 \DefinedAs \frac{z(k)}{k-1} - S_1.\]
Now using $\card{\HH} = \card{G} - \card{\L}$ gives
\begin{equation}\label{eq10c}
2\size{G} \ge \frac{\parens{k +(k-1)h(k)- p(k) - \frac{kz(k)}{2(k-1)}}\card{\L} + (k- 3)h(k)\card{\HH^-} + \parens{\frac{(3k-4)z(k)}{2(k-1)} + 1}\card{G} + S_2}{1 + h(k) + \frac{z(k)}{k-1}}.
\end{equation}
Now using \eqref{eq2c} to get a lower bound on $\card{\HH^-}$ gives
\begin{equation}\label{eq11c}
2\size{G} \ge \frac{\parens{k - (k-5)h(k)- p(k) - \frac{kz(k)}{2(k-1)}}\card{\L}+\parens{(k+1)(k-3)h(k) + \frac{(3k-4)z(k)}{2(k-1)} + 1}\card{G} + S_2}{1 + (k-2)h(k) + \frac{z(k)}{k-1}}.
\end{equation}
Using \eqref{eq1c} to get a lower bound on $\card{\L}$ and simplifying gives
\begin{equation}\label{eq12c}
\frac{2\size{G}}{\card{G}} \ge \frac{k^2 + 3(k-1)h(k) - kp(k) + 1 - \frac{k^2-3k+4}{2(k-1)}z(k) + \frac{S_2}{\card{G}}}{k+1 + 3h(k) - p(k) - \frac{(k-2)z(k)}{2(k-1)}}.
\end{equation}
Now factoring out $k-1$ gives the desired bound.
\end{proof}

\noindent A nearly identical argument, using the other inequality in Lemma \ref{qLemmaList}, proves a bound that holds for $k \ge 5$.

\begin{thm}\label{k5}
	Let $\parens{p,h,z,f}$ be $5$-Gallai.  If $k \ge 5$ and $2 \le z(k) \le \frac{6(k-1)}{k}$, then for any non-complete $k$-list-critical graph $G$,
	\[d(G) \ge k-1 + \frac{2 - p(k) - \frac{z(k)}{k-1} + \frac{\frac{z(k)}{k-1} - (h(k) + f(k))c^*(\L)}{\card{G}}}{k+1 + 4h(k) - p(k) - \frac{(k-2)z(k)}{2(k-1)}},\]
	where $\L$ is the subgraph of $G$ induced on $(k-1)$-vertices.
\end{thm}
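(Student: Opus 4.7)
The plan is to mirror the proof of Theorem \ref{k7} step by step, replacing the second inequality of Lemma \ref{qLemmaList} (the one needing $k \ge 7$) with the first, $q(\L) \le c^*(\L) + 4\card{\HH^-} + \size{\HH^+,\L}$, which holds for all $k \ge 5$. This is the only place $k \ge 7$ entered the previous argument, and the $5$-Gallai hypothesis makes \eqref{eq5c} available across the same range, so every other step in the chain from \eqref{eq1c} through \eqref{eq12c} transfers verbatim.

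First I would rederive the analogue of \eqref{eq6c}: the new $q$-bound changes the coefficient in front of $\card{\HH^-}$ from $3h(k)$ to $4h(k)$, and redefines $S_1$ as $(h(k)+f(k))c^*(\L)$ instead of $(2h(k)+f(k))c^*(\L)$. These two shifts are the only numerical differences from the proof of Theorem \ref{k7}; everything downstream is a mechanical consequence. In particular, expanding $2\size{G} = 2\size{\HH} + 2\size{\HH,\L} + 2\size{\L}$ via \eqref{eq4c}, then adding $h(k)$ times \eqref{eq3c}, then inserting Lemma \ref{betaLemmaList}, then substituting $\card{\HH} = \card{G}-\card{\L}$, and finally lower-bounding $\card{\HH^-}$ via \eqref{eq2c} and $\card{\L}$ via \eqref{eq1c}, I obtain a bound on $2\size{G}/\card{G}$ structurally identical to \eqref{eq12c}.

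Tracking the shifted coefficients through, the $\card{\HH^-}$ term in the analogue of \eqref{eq8c} picks up $(k-4)h(k)$ instead of $(k-3)h(k)$, so the analogue of \eqref{eq11c} carries $\card{\L}$-coefficient $k-(k-7)h(k)-p(k)-\frac{kz(k)}{2(k-1)}$ and left-hand-side factor $1+(k-3)h(k)+\frac{z(k)}{k-1}$. After invoking \eqref{eq1c}, the denominator of the final bound becomes $k+1+4h(k)-p(k)-\frac{(k-2)z(k)}{2(k-1)}$ and the $h(k)$-piece of the numerator is $4(k-1)h(k)$. Subtracting $k-1$ from the resulting fraction then triggers the same cancellation pattern as in Theorem \ref{k7}: the $4(k-1)h(k)$ pieces cancel against $(k-1)\cdot 4h(k)$, the $p(k)$ pieces collapse to $-p(k)$, and the $z(k)$ pieces collapse to $-\frac{z(k)}{k-1}$, leaving the stated offset $2-p(k)-\frac{z(k)}{k-1} + \frac{S_2}{\card{G}}$, where $S_2 = \frac{z(k)}{k-1} - (h(k)+f(k))c^*(\L)$.

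The main obstacle is just the bookkeeping as the $3 \to 4$ and $2 \to 1$ coefficient shifts propagate through the half-dozen algebraic rearrangements; no new conceptual ingredient is required. The bounds $2 \le z(k) \le \frac{6(k-1)}{k}$ play the same role as in Theorem \ref{k7}, namely legitimizing the invocation of Lemma \ref{betaLemmaList}, and the strengthening of $7$-Gallai to $5$-Gallai is exactly what is needed so that \eqref{eq5c} remains available for every $k \ge 5$.
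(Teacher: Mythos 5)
Your proposal is correct and is exactly the paper's intended argument: the paper itself only remarks that ``a nearly identical argument, using the other inequality in Lemma \ref{qLemmaList}, proves a bound that holds for $k \ge 5$,'' and your coefficient tracking (the $(k-4)h(k)$ term in the analogue of \eqref{eq8c}, the $\card{\L}$-coefficient $k-(k-7)h(k)-p(k)-\frac{kz(k)}{2(k-1)}$, the numerator piece $4(k-1)h(k)$, and $S_2=\frac{z(k)}{k-1}-(h(k)+f(k))c^*(\L)$) all check out against the stated bound.
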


\noindent When $k=4$, we cannot apply Lemma \ref{qLemmaList}, but using $h(k)=0$ and running through the same argument proves the following bound for $k\ge 4$.
\begin{thm}\label{k4}
	Let $\parens{p,0,z,f}$ be $4$-Gallai.  If $k \ge 4$ and $2 \le z(k) \le \frac{6(k-1)}{k}$, then for any non-complete $k$-list-critical graph $G$,
	\[d(G) \ge k-1 + \frac{2 - p(k) - \frac{z(k)}{k-1} + \frac{\frac{z(k)}{k-1} - f(k)c^*(\L)}{\card{G}}}{k+1 - p(k) - \frac{(k-2)z(k)}{2(k-1)}},\]
	where $\L$ is the subgraph of $G$ induced on $(k-1)$-vertices.
\end{thm}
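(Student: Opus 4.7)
The plan is to run the proof of Theorem \ref{k7} verbatim with $h(k) = 0$. The only place in that argument where Lemma \ref{qLemmaList} is invoked is to control the $h(k)q(\L)$ term arising from the Gallai bound on $2\size{\L}$; once that term vanishes, Lemma \ref{qLemmaList} is unnecessary, which removes the sole obstruction to allowing $k = 4$. The $4$-Gallai hypothesis on $(p, 0, z, f)$ supplies precisely the replacement bound needed for $k \ge 4$.

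Concretely, let $\HH^-$, $\HH$, $\HH^+$, and $\D$ be as in the proof of Theorem \ref{k7}. The inequalities \eqref{eq1c}--\eqref{eq4c} continue to hold. The $4$-Gallai hypothesis gives
$$2\size{\L} \le (k - 3 + p(k))\card{\L} + f(k)\card{\D} + z(k)\beta(\L),$$
which serves as the analogue of \eqref{eq6c} with $S_1 \DefinedAs f(k)\card{\D}$; note that the $\card{\HH^-}$ and $\size{\HH^+, \L}$ terms of \eqref{eq6c} both carry a factor of $h(k)$ and hence drop out. Combining with \eqref{eq4c} then yields the analogue of \eqref{eq7c}:
$$2\size{G} \ge 2\size{\HH} + (k + 1 - p(k))\card{\L} - z(k)\beta(\L) - S_1.$$
At this point, the step in Theorem \ref{k7} of adding $h(k)$ times \eqref{eq3c} is vacuous, so we substitute Lemma \ref{betaLemmaList} directly (it applies since $2 \le z(k) \le \frac{6(k-1)}{k}$), replace $\card{\HH}$ with $\card{G} - \card{\L}$, and use \eqref{eq1c} to lower-bound $\card{\L}$. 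The subsequent step of using \eqref{eq2c} to lower-bound $\card{\HH^-}$ is likewise vacuous, since the coefficient of $\card{\HH^-}$ produced along the way is proportional to $h(k) = 0$.

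The only real obstacle is arithmetic bookkeeping: one verifies that with $h(k) = 0$ the coefficients in the chain of substitutions specialize correctly, so that after dividing by $\card{G}$ and factoring out $k - 1$ the denominator reduces to $k + 1 - p(k) - \frac{(k-2)z(k)}{2(k-1)}$ and the numerator attains the claimed form. No new reducibility argument is required because the $q(\L)$ axis of badness is unused, and the restriction to $k \ge 5$ in Theorems \ref{k7} and \ref{k5} originated solely from Lemma \ref{qLemmaList}; with that lemma sidestepped, $k \ge 4$ suffices.
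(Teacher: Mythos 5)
Your proposal is correct and is exactly the paper's intended argument: the paper proves Theorem \ref{k4} by the remark that "using $h(k)=0$ and running through the same argument" as Theorem \ref{k7} works, since the only use of Lemma \ref{qLemmaList} (the source of the $k\ge 5$ restriction) is to bound the $h(k)q(\L)$ term, which vanishes. Your bookkeeping of $S_1 = f(k)\card{\D}$, the vacuous steps involving \eqref{eq2c} and \eqref{eq3c}, and the resulting denominator $k+1-p(k)-\frac{(k-2)z(k)}{2(k-1)}$ all match the specialization of \eqref{eq6c}--\eqref{eq12c} to $h(k)=0$.
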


When $z(k) < 2$, using Lemma \ref{betaLemmaList} worsens the lower bound, so we may as well use $z(k)=0$; that is, drop the $\beta(\L)$ term entirely.  
Doing so in the above argument shows that Theorems \ref{k7}, \ref{k5}, \ref{k4} hold for $z(k) = 0$ if we replace $k+1$ in the denominator with $k+2$.  
This gives the bounds proved by discharging in Cranston and R. \cite{DischargingLowerBound}.

\section{Gallai quadruples}
All known proofs of lower bounds for average degree of list-critical graphs are essentially a counting argument combined with the fact that some quadruple is Gallai.

\begin{lem}[Gallai \cite{gallai1963kritische}]
$\parens{\frac{k+1}{k-1}, 0, 0, -2}$ is $4$-Gallai.
\end{lem}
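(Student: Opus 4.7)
The claim is equivalent to showing $g(T) \ge \epsilon$, where $g(T) \DefinedAs \alpha\card{T} - 2\size{T}$ with $\alpha \DefinedAs (k-3) + \frac{k+1}{k-1} = \frac{k^2-3k+4}{k-1}$, and $\epsilon = 2$ if $K_{k-1} \subseteq T$ and $\epsilon = 0$ otherwise. First observe that any $K_{k-1}$ subgraph of $T$ is $2$-connected (since $k \ge 4$), hence is contained in a single block of $T$; and since every block is an odd cycle or a clique of order at most $k-1$ (using $T \ne K_k$ and $\Delta(T) \le k-1$), the condition $K_{k-1} \subseteq T$ is equivalent to some block of $T$ being $K_{k-1}$. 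A direct computation gives $g(K_{k-1}) = (k-1)(\alpha - (k-2)) = 2$, and a short case check shows $g(B) \ge \alpha$ for every other admissible block $B$.

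The proof proceeds by induction on the number of blocks $m$ of $T$. The base case $m = 1$ is immediate from the per-block computation. For $m \ge 2$, suppose first that some leaf block $B$ of the block-cut tree is not $K_{k-1}$; let $v$ be its unique cut vertex and set $T' \DefinedAs T - (V(B) \setminus \{v\})$. Then $T'$ is a Gallai tree with $m-1$ blocks satisfying $\Delta(T') \le k-1$ and $T' \ne K_k$ (else $\deg_T(v) \ge k$). A straightforward edge/vertex count yields $g(T) = g(T') + g(B) - \alpha \ge g(T')$, and since $K_{k-1} \not\subseteq B$ we have $K_{k-1} \subseteq T$ iff $K_{k-1} \subseteq T'$; the induction hypothesis then closes this case.

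The main obstacle is the remaining case, where every leaf block of $T$ is $K_{k-1}$. Since the block-cut tree has at least two leaves, $T$ contains at least two copies of $K_{k-1}$ as blocks and $\epsilon = 2$. The key structural observation is that if $B = K_{k-1}$ is a block with cut vertex $v$, then $\deg_B(v) = k-2$ combined with $\deg_T(v) \le k-1$ forces $v$ to lie in exactly one further block, which must be an edge $\{v,w\}$. Choosing such a $K_{k-1}$ leaf block $B$ and setting $T'' \DefinedAs T - V(B)$ removes both $B$ and the pendant edge $\{v,w\}$. The crucial cancellation is
\[
g(T) - g(T'') = (k-1)(\alpha - (k-2)) - 2 = 2 - 2 = 0,
\]
so $g$ is preserved exactly. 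Two $K_{k-1}$ blocks cannot share a vertex (the common vertex would have degree at least $2(k-2) > k-1$), so the remaining $K_{k-1}$ blocks of $T$ survive in $T''$, giving $K_{k-1} \subseteq T''$; the induction hypothesis then yields $g(T'') \ge 2$, hence $g(T) \ge 2 = \epsilon$.
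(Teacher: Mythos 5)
Your proof is correct; the paper itself gives no proof of this lemma (it is cited to Gallai), but your block-peeling induction with the potential $g(T)=\alpha\card{T}-2\size{T}$ is essentially the same minimal-counterexample/endblock-removal technique the paper uses to establish its other Gallai quadruples in Lemmas \ref{nokkm1} and \ref{bwkk1}. All the key checks go through: $g(K_{k-1})=2$, $g(B)\ge\alpha$ for every other admissible block, and the exact cancellation when deleting a $K_{k-1}$ endblock together with the forced pendant edge at its cut vertex.
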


\begin{lem}[Kostochka-Stiebitz \cite{kostochkastiebitzedgesincriticalgraph}]
$\parens{\frac{4(k-1)}{k^2 - 3k + 4}, \frac{k^2 - 3k}{k^2-3k+4}, 0, \frac{-4(k^2-3k+2)}{k^2-3k+4}}$ is $7$-Gallai.
\end{lem}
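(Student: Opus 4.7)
The plan is to prove this by induction on the number of blocks of $T$. Since $T$ is a Gallai tree with $\Delta(T) \le k-1$ and $T \ne K_k$, each block of $T$ is either a clique $K_j$ with $2 \le j \le k-1$ or an odd cycle $C_{2\ell+1}$ with $\ell \ge 1$; a $K_k$ block is ruled out because it would already saturate $\Delta$ on its own, forcing $T = K_k$. Throughout I write $p = p(k)$, $h = h(k)$, $f = f(k)$, and let $I(T) \DefinedAs [K_{k-1} \subseteq T]$, so the claim takes the unified form
\[ 2\size{T} \le (k-3+p)\card{T} + h\,q(T) + f\,I(T). \]

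For the base case, $T$ is a single block. If $T = K_{k-1}$, every vertex is non-cut and lies in a $K_{k-1}$, so $q(T) = k-1$; plugging in the explicit formulas yields $(p+h)(k-1) + f = k-1$, i.e.\ equality, and this identity is exactly what pins down the constants. If $T = K_j$ with $2 \le j \le k-2$, then $q(T) = 0$ and the inequality reduces to the scalar bound $j(j-1) \le (k-3+p)j + f$, a short polynomial check that is in fact tight again at $j = k-2$. If $T$ is an odd cycle $C_{2\ell+1}$, then $q(T) = 0$ and $2\size{T}/\card{T} = 2 \le k-3+p$ for $k \ge 7$, so the bound is immediate.

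For the inductive step, I pick a leaf block $B$ of the block-cut tree of $T$, let $v$ be the unique cut vertex of $T$ in $B$, and set $T' \DefinedAs T - (V(B)\setminus\{v\})$. Then $T'$ is a Gallai tree with one fewer block, still satisfies $\Delta(T') \le k-1$, and $T' \ne K_k$. Applying the inductive hypothesis to $T'$ and using the identities $\size{T} = \size{T'} + \size{B}$ and $\card{T} = \card{T'} + \card{B} - 1$, the inequality for $T$ reduces to the local inequality
\[ 2\size{B} \le (k-3+p)(\card{B}-1) + h\bigl(q(T) - q(T')\bigr) + f\bigl(I(T) - I(T')\bigr), \]
which I would then verify case-by-case on the type of $B$.

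The main obstacle is the bookkeeping of $q(T) - q(T')$ and $I(T) - I(T')$. When $B = K_{k-1}$, the $k-2$ vertices of $B \setminus \{v\}$ are non-cut in $T$ and lie in a $K_{k-1}$, so they contribute $+1$ each to $q(T)$ but vanish from $T'$. The vertex $v$ may also switch status: if $v$ lies in exactly two blocks of $T$ then $v$ is non-cut in $T'$, and in that case its contribution to $q(T')$ depends on whether the other block of $T$ through $v$ is itself a $K_{k-1}$. Analogously, $I(T) - I(T')$ is nonzero only when $B$ is the unique $K_{k-1}$ block of $T$. I would therefore split the inductive step by the type of $B$, by whether $v$ is in exactly $2$ or at least $3$ blocks of $T$, and by whether $T'$ still contains a $K_{k-1}$; in each sub-case the local inequality becomes a short direct computation. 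The tightest sub-cases, $B = K_{k-1}$ and $B = K_{k-2}$, give equality and match the tight base cases exactly; this matching of slack is what forces the Kostochka--Stiebitz choice of $p$, $h$, and $f$.
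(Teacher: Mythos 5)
Your setup (base cases plus a reduction to a per-endblock inequality) is the right general shape---the paper's Lemma \ref{bwkk1} is organized the same way, phrased as a minimal counterexample rather than an induction---but the local inequality you propose to verify is false in precisely the two cases you describe as ``tight.'' Take $k=7$, so $p=3/4$, $h=7/8$, $f=-15/4$. First, let $T$ be two copies of $K_6$ joined by an edge between them, and let $B$ be one of the $K_6$ endblocks. Deleting $V(B)\setminus\set{x_B}$ gives $2\size{B}=30$, while $(k-3+p)(\card{B}-1)+h(q(T)-q(T'))+f(I(T)-I(T'))=4.75\cdot 5+0.875\cdot 5+0=28.125<30$; the global bound for $T$ still holds (with equality, $62\le 62$) only because the inductive bound for $T'$ has slack $1.875$ that your local step cannot see. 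Second, for $B=K_{k-2}$ the local inequality requires $(k-3)(1-p)\le h(q(T)-q(T'))$, and generically $q(T)=q(T')$ while $p<1$ for all $k\ge 6$, so it fails again (e.g.\ $T=K_5$ plus a pendant edge at $k=7$ gives $20\not\le 19$). So ``remove an arbitrary leaf block and check a local inequality'' does not close the induction, and your claim that the $K_{k-1}$ and $K_{k-2}$ subcases give equality is incorrect---they give strict violations.

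The repair is exactly what Lemma \ref{bwkk1} does. Your local step does succeed for every other block type, which shows all endblocks are $K_{k-2}$ or $K_{k-1}$ (Claim 2 there). A second $K_{k-1}$ endblock is then handled by deleting the \emph{entire} block including its cutvertex, which buys an extra $(k-3+p)$ on the right and reduces the requirement to $(k-1)p+(k-3)h\ge k+1$; the Kostochka--Stiebitz values satisfy this with equality. A $K_{k-2}$ endblock is handled by using $d_T(x_B)=k-1$ to locate an adjacent odd cycle and deleting $B$ together with a degree-two neighbour on that cycle, or together with a second $K_{k-2}$ endblock hanging off it---again larger deletions whose combined slack covers the local deficit. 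In practice you need not redo any of this: check that the Kostochka--Stiebitz quadruple satisfies conditions (1)--(5) of Lemma \ref{bwkk1} (conditions (1) and (5) hold with equality) together with the hypothesis of Lemma \ref{nokkm1}, and the statement follows.
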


\begin{lem}[Cranston-R. \cite{DischargingLowerBound}]
$\parens{\frac{3k-5}{k^2-4k+5}, \frac{k(k-3)}{k^2-4k+5}, 0, \frac{-2(k-1)(2k-5)}{k^2-4k+5}}$ is $5$-Gallai.
\end{lem}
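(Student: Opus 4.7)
The plan is to induct on the number of blocks of $T$. Since $\Delta(T) \le k-1$ and $T$ is a Gallai tree, every block is either a complete graph $K_s$ with $s \le k-1$ or an odd cycle $C_s$.

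For the base case $T$ is a single block, and the inequality reduces to a direct algebraic check. The extremal single-block case is $T = K_{k-1}$, where the bound collapses to the identity
\[p(k) + h(k) + \frac{f(k)}{k-1} = \frac{(3k-5) + k(k-3) - 2(2k-5)}{k^2-4k+5} = \frac{k^2-4k+5}{k^2-4k+5} = 1,\]
which the given constants satisfy with equality; the other single-block cases ($K_s$ for $2 \le s \le k-2$ and odd cycles $C_s$) are strictly looser since $p(k) \ge 0$ and $k \ge 5$.

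For the inductive step, suppose $T$ has at least two blocks, and let $B$ be an endblock of $T$ with cut vertex $v$. Setting $T' \DefinedAs T - (V(B) \setminus \{v\})$ gives $\card{T'} = \card{T} - (\card{B}-1)$, $\size{T'} = \size{T} - \size{B}$, and $q(T') = q(T) - q_B$, where $q_B = k-2$ if $B = K_{k-1}$ and $q_B = 0$ otherwise (since no block other than $K_{k-1}$ contains a copy of $K_{k-1}$ when $k \ge 5$). Applying the inductive hypothesis to $T'$ in the appropriate case (with or without $K_{k-1}$) and rearranging, the step closes provided a block-local inequality relating $2\size{B}$, $(k-3+p(k))(\card{B}-1)$, and $h(k)q_B$. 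For $B = K_s$ with $s \le k-3$ or $B = C_s$ with $s \ge 5$, the local inequality is immediate from $p(k) \ge 0$; for $B = C_3 = K_3$ it is tight at $k = 5$ (where $k-3+p(k) = 3$) and loose for $k \ge 6$.

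The hard part will be the cases $B = K_{k-2}$ and $B = K_{k-1}$ for $k \ge 6$, where the direct accounting is too crude because $p(k) < 1$ and $p(k) + h(k) < 2$ there. To close these cases I would exploit the forced degree structure of $v$: since $\Delta(T) \le k-1$, the cut vertex $v$ has degree essentially $k-1$ in $T$ but becomes a leaf (if $B = K_{k-1}$) or near-leaf (if $B = K_{k-2}$) in $T'$, producing a negative excess $d_{T'}(v) - (k-3+p(k)) \le 4-k-p(k) < 0$ at $v$ in $T'$ that exactly compensates the block-local shortfall. Tracking this slack requires a slightly strengthened inductive hypothesis or, following Cranston--R.~\cite{DischargingLowerBound}, a discharging argument that moves excess across cut vertices. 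The coefficients $p(k), h(k), f(k)$ are pinned down by forcing the extremal configurations $T = K_{k-1}$ and $T = K_{k-1}$ plus a pendant edge (both tight at $k = 5$) to saturate the bound, and the main verification is that no other endblock configuration drives the inequality tighter.
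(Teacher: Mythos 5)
Your setup (induction on blocks, peeling off an endblock $B$ at the cutvertex $x_B$) is the right skeleton and matches the paper's general sufficiency argument (Lemma \ref{bwkk1} with $z(k)=0$; one checks that this quadruple satisfies conditions (1)--(5) there, with equality in (1), (4) and (5)). But the proposal has a genuine gap exactly where you flag "the hard part." As you correctly compute, removing a $K_{k-2}$ endblock alone requires $p(k)\ge 1$, which fails for $k\ge 6$, and your proposed repair --- harvesting the degree deficit at $x_B$ in $T'$ via "a slightly strengthened inductive hypothesis or a discharging argument" --- is not carried out. That is not a detail: the deficit at $x_B$ is not available to the stated inductive hypothesis, and no strengthening is formulated, so the induction as written does not close. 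The actual mechanism is different. One first shows (i) $T$ has at most one $K_{k-1}$ endblock (removing all of $V(B)$ for one of two such endblocks, using that $q$ drops by at least $k-3$ and invoking the identity $(k-1)p(k)+(k-3)h(k)=k+1$), and (ii) every endblock cutvertex has degree exactly $k-1$, so each $K_{k-2}$ endblock hangs off an odd cycle. Then one chooses an endblock $B$ at the end of a longest path in the block tree and deletes \emph{more than one block at a time}: either $V(B)\cup\set{y}$ for a degree-two neighbor $y$ of $x_B$ on the cycle, or $V(B)\cup V(A)$ for a sibling $K_{k-2}$ endblock $A$. These larger deletions yield the inequalities $(k-2)(k-3)+6\le\parens{k-3+p(k)}(k-1)$ and $6\le 2(k-2)p(k)$, both of which hold precisely because $p(k)\ge \frac{3}{k-2}$ for $k\ge5$. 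Without this structural reduction and the multi-block deletion, the cases you defer remain open.

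Two smaller accounting errors: for $B=K_2$ the quantity $q$ can \emph{increase} upon deletion ($x_B$ may become a non-cut vertex lying in a $K_{k-1}$, so $q(T')\le q(T)+1$, not $q(T')=q(T)$), which is why the closing of that case needs $p(k)\ge h(k)+5-k$ rather than just $p(k)\ge 0$; and when the deleted endblock is the unique $K_{k-1}$ of $T$, the graph $T'$ falls under the second bullet of the definition (no $f$ or $q$ terms), so the two cases of the inductive hypothesis must be matched more carefully than "the appropriate case" suggests.
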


\begin{lem}[R. \cite{Better4ListCriticalBound}]\label{Rbound}
$\parens{1, 0, 2, 0}$ is $4$-Gallai.
\end{lem}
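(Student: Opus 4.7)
The plan is to prove by strong induction on $\card{T}$ the single inequality
\[2\size{T} \le (k-2)\card{T} + 2\beta(T),\]
which is both clauses of the definition since $h(k) = f(k) = 0$.  Let $D := \setb{v}{V(T)}{\deg_T(v) = k-1}$, so that $\beta(T) = \alpha(T[D])$.  First I would record two structural facts that control $D$.  Since a non-cut vertex of a $K_m$-block has $T$-degree $m-1 \le k-2$ and a non-cut vertex of an odd-cycle block has $T$-degree $2 \le k-2$ (using $k \ge 4$), every vertex of $D$ is a cut vertex of $T$.  Moreover, a cut vertex $v$ of a $K_{k-1}$-block lies in exactly one other block of $T$, and that block must be an edge $K_2$, since any heavier block would give $\deg_T(v) \ge (k-2)+2 > k-1$.

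The base cases are $T$ a single vertex, $T = K_m$ with $m \le k-1$, or $T$ a single odd cycle; in all of these $D = \emptyset$ and the bound reduces to $2\size{T} \le (k-2)\card{T}$, which is immediate.

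For the inductive step with $T$ having at least two blocks, pick a leaf block $B$ with unique cut vertex $v$ and split into three cases.  If $B = K_m$ with $2 \le m \le k-2$, let $T' := T - (V(B) \setminus \set{v})$; the removed vertices lie outside $D$, so $\beta(T) \ge \beta(T')$, and the vertex and edge count changes produce a net surplus $(m-1)(m-k+2) \le 0$ which, combined with the IH on $T'$, closes the step.  If $B = C_{2\ell+1}$ with $\ell \ge 2$ and $k \ge 5$, the same removal works because $2(2\ell+1) - 2\ell(k-2) = 2(1 + \ell(4-k)) \le 0$ and $\beta(T) \ge \beta(T')$.  The harder case is $B = K_{k-1}$, or $B = C_{2\ell+1}$ with $\ell \ge 2$ and $k=4$ (in which case $\deg_T(v) \ge 2+1 = k-1$ forces $v \in D$ and a pendant $K_2 = \set{v,u}$ by the degree argument).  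In both of these the naive removal leaves a deficit of exactly $2$ on the right-hand side, so instead I would remove $V(B)$ entirely to form $T''$, apply the IH to $T''$, and observe that any maximum independent set $I^{*}$ of $T''[D_{T''}]$ extends by $v$: since $v$'s only possible neighbor in $D_T$ is $u$, and $\deg_{T''}(u) = \deg_T(u) - 1 \le k-2$ forces $u \notin D_{T''}$, the set $I^{*} \cup \set{v}$ is independent in $T[D_T]$, so $\beta(T) \ge \beta(T'') + 1$ and this absorbs the deficit.

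The main obstacle is the $K_{k-1}$-block case and its odd-cycle analogue at $k=4$, where the standard block-peeling has no slack in the vertex-edge accounting; the resolution is to peel $v$ along with its pendant $K_2$ and use the two structural facts above to add a fresh independent vertex to $T[D]$ in the quotient.
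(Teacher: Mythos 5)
Your proof is correct, and it is essentially the standard endblock-peeling induction: the paper itself does not reprove Lemma \ref{Rbound} (it is quoted from \cite{Better4ListCriticalBound}), but your argument is the same style as the paper's Lemmas \ref{nokkm1} and \ref{bwkk1}, namely take a minimal counterexample (equivalently, induct on $\card{T}$), peel an endblock, and track the change in $\beta$. Your two structural observations are exactly right: since $\Delta(T)\le k-1$ and $T\ne K_k$, every vertex of degree $k-1$ is a cut vertex, and a cut vertex of a $K_{k-1}$-block (or, when $k=4$, of a long odd-cycle block) carries exactly one additional block, which is a $K_2$; this forces $\beta(T)\ge\beta(T'')+1$ after deleting the whole endblock, which is precisely the extra $+2$ needed to close the only tight case. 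Note that this tight case is where condition (5) of Lemma \ref{bwkk1} holds with equality for $(1,0,2,0)$: $(k-1)\cdot 1 + 0 + 2 = k+1$. The one genuine added value of your write-up over invoking the paper's machinery is that Lemmas \ref{nokkm1} and \ref{bwkk1} are stated only for $k\ge 5$, whereas the claim is $4$-Gallai; you handle $k=4$ directly, where the long-odd-cycle endblock also has zero slack and needs the same $\beta$-gain trick. All the accounting checks out: the deficits $(m-1)(m-k+2)\le 0$ for $K_m$ with $m\le k-2$, $2(1+\ell(4-k))\le 0$ for odd cycles when $k\ge 5$, and exactly $2$ in the hard cases, absorbed by $2(\beta(T)-\beta(T''))\ge 2$. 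Also, your implicit use of $\beta(T')\le\beta(T)$ for induced subgraphs is safe since $\Delta(T)\le k-1$ forces $D_{T'}\subseteq D_T$.
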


We give a a list of inequalities that provide a sufficient condition for $(p,h,z,f)$ to be $5$-Gallai.  
These inequalities take a form quite similar to the inequalities in Cranston and R. \cite{DischargingLowerBound}, but now
they involve $z(k)$ as well.   The sufficiency proof is a small modification of the proof in \cite{DischargingLowerBound}. 
To use a Gallai quadruple in Lemma \ref{k7}, we want $2h(k) + f(k) \le 0$ to get rid of the term involving $c^*(\L)$.  Similarly,
for Lemma \ref{k5}, we want $h(k) + f(k) \le 0$.   Finding the $p,h,z,f$ that give the largest average degree subject to these constraints
is a fractional linear program that can be converted to a linear program and solved for each $k$.  This is useful
for verification of bounds, but we want a formula in terms of $k$.  For $k \ge 7$, we use the following quadruple.

\begin{lem}\label{Gallai7Up}
$\parens{\frac{3k-7}{k^2-4k+5}, \frac{(k-1)(k-4)}{k^2-4k+5}, 2, \frac{-2(k-1)(k-4)}{k^2-4k+5}}$ is $5$-Gallai.
\end{lem}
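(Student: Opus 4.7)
The plan is to adapt the block-by-block charge analysis from~\cite{DischargingLowerBound}, modifying the accounting to absorb the new $z(k)\beta(T) = 2\beta(T)$ term on the right-hand side. Let $T$ be a Gallai tree with $T \ne K_k$, $k \ge 5$, and $\Delta(T) \le k-1$; every block $B$ of $T$ is either an odd cycle or a clique $K_j$ with $2 \le j \le k-1$. Using the block-tree identities $2\size{T} = \sum_B 2\size{B}$ and $\card{T} = \sum_B \card{B} - \sum_v (b(v)-1)$, where $b(v)$ counts the blocks of $T$ containing $v$, the target inequality reduces to showing that the total excess contributed by $K_{k-1}$ blocks together with the shared-cut-vertex surplus $(k-3+p(k))\sum_v (b(v)-1)$ is covered by $h(k)q(T) + 2\beta(T) + f(k)$ (in Case 1, $K_{k-1} \subseteq T$) or by $2\beta(T)$ alone (in Case 2).

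For a non-$K_{k-1}$ block $B$ the per-block deficit $(k-3+p(k))\card{B} - 2\size{B}$ is non-negative for $k \ge 5$: for $K_j$ with $j \le k-2$ it equals $j(k-2+p(k)-j)$, and for an odd cycle of length $\ell$ it equals $(k-5+p(k))\ell$. A $K_{k-1}$ block has deficit $-(k-1)(1-p(k))$, which by a direct calculation equals $-h(k)(k-3)$. In Case 2, every block has non-negative deficit and the shared-cut-vertex surplus is absorbed by $2\beta(T)$: a $K_2$ pendant has deficit $2(k-4+p(k))$ that dominates the incidence cost $k-3+p(k)$, leaving slack and an extra vertex for the independent set, and odd cycles and smaller cliques work similarly. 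In Case 1, the constraint $\Delta(T) \le k-1$ forces every cut vertex of a $K_{k-1}$ block to lie in exactly one additional block, which must be a $K_2$ bridge. Thus each $K_{k-1}$ block $B$ contributes $k-1-c_B$ non-cut vertices to $q(T)$, where $c_B$ is the number of cut vertices of $B$; the excess $h(k)(k-3)$ per block is then matched by the contribution $h(k)(k-1-c_B)$ to $h(k)q(T)$ together with the slack from the $c_B$ attached bridges, and the constant $f(k) = -2h(k)$ offsets a global overcount on one distinguished $K_{k-1}$ block.

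The main obstacle will be the case analysis for Gallai trees in which several $K_{k-1}$ blocks are joined through subtrees of odd cycles and small cliques, where $q(T)$, $\beta(T)$, and the shared-vertex surplus all interact. However, since the $2\beta(T)$ term only strengthens the inequalities already verified in~\cite{DischargingLowerBound}, the modifications are routine. The tightness of the chosen coefficients can be seen on a chain of $m$ copies of $K_{k-1}$ joined by $m-1$ bridges: the identity $(k-1)(1-p(k)) = h(k)(k-3)$ forces the coefficient of $m$ in the balance to vanish, leaving the constant slack $2 + 2h(k) + f(k) = 2$. The remaining verifications reduce to polynomial inequalities in $k$ that hold for $k \ge 5$ by direct computation.
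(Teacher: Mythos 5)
Your outline diverges from the paper's route: the paper does not re-do the Gallai-tree analysis for this particular quadruple at all. It first proves two general sufficiency lemmas by minimal counterexample (Lemma \ref{nokkm1} for the case $K_{k-1}\not\subseteq T$, and Lemma \ref{bwkk1}, whose conditions (1)--(5) encode exactly the extremal configurations: a lone $K_{k-1}$, a $K_2$ endblock, a $K_{k-2}$ endblock with low-degree cutvertex, two $K_{k-1}$ endblocks, and a $K_{k-2}$ hanging off an odd cycle). Lemma \ref{Gallai7Up} is then a pure algebra check; for this quadruple conditions (1), (4) and (5) hold with \emph{equality}, so there is no slack anywhere to absorb imprecision.

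The concrete gap in your proposal is the claim that ``since the $2\beta(T)$ term only strengthens the inequalities already verified in \cite{DischargingLowerBound}, the modifications are routine.'' This is false: the present quadruple has strictly smaller $p(k)$ ($\frac{3k-7}{k^2-4k+5}$ versus $\frac{3k-5}{k^2-4k+5}$) and strictly smaller $h(k)$ than the Cranston--R.\ quadruple, so the target inequality is not obtained from theirs by adding a nonnegative term; the $\beta$-term must actively compensate for the weakened $p$ and $h$, and showing how it does so is the entire content of the lemma. Your per-block accounting in Case 2 already breaks on a $K_{k-2}$ endblock: its deficit is $(k-2)p(k)$ while its incidence cost at the cutvertex is at least $k-3+p(k)$, and $(k-2)p(k)\ge k-3+p(k)$ requires $p(k)\ge 1$, which fails for all $k\ge 5$ since $(k-3)(k-4)>0$. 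So $K_{k-2}$ endblocks always run a shortfall of $(k-3)(1-p(k))>0$ that must be recovered from $2\beta(T)$ --- and cashing in $\beta(T)$ is delicate, because $\beta(T)$ counts only an independent set among vertices of degree \emph{exactly} $k-1$ in $T$ (a pendant leaf or a degree-$(k-2)$ endblock vertex contributes nothing), which is why the paper must first prove that endblock cutvertices have degree $k-1$ (Claim 3 of Lemma \ref{nokkm1}, Claim 4 of Lemma \ref{bwkk1}) before it can peel off configurations while strictly decreasing $\beta$. Your sketch never establishes this. A smaller sign that the computations were not carried through: on the chain of $m$ copies of $K_{k-1}$ joined by bridges the inequality is tight (the slack is $2h(k)+f(k)=0$, not $2$), consistent with the fact that the quadruple sits on the boundary of the feasible region.
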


\noindent For $k=6$, we use the following quadruple.  For $k=5$, the quadruple in Lemma \ref{Rbound} is the optimal choice of $p,h,z,f$.

\begin{lem}\label{Gallai6Up}
$\parens{\frac{3k-5}{k^2-3k+3}, \frac{(k-1)(k-4)}{k^2-3k+3}, \frac{(3k-5)(k-2)}{k^2-3k+3}, \frac{-(k-1)(k-4)}{k^2-3k+3}}$ is $5$-Gallai.
\end{lem}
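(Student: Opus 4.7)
The plan is to prove the lemma by induction on the block structure of $T$, adapting the sufficiency argument from Cranston--R.~\cite{DischargingLowerBound} to carry the additional $z(k)\beta(T)$ term through the accounting. Writing $D = k^2 - 3k + 3$, the quadruple reads $p = (3k-5)/D$, $h = (k-1)(k-4)/D$, $z = (3k-5)(k-2)/D$, and $f = -(k-1)(k-4)/D$. The identity $f = -h$ is useful: it lets the one-time $f$-charge for introducing a $K_{k-1}$-block be bookkept symmetrically with the per-vertex $h$-charges rather than as an independent constant.

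First I would dispatch the base cases of single-block Gallai trees. Because $T \ne K_k$ and $\Delta(T) \le k-1$, every vertex of a single block has degree at most $k-2$ in $T$, so $\beta(T) = 0$ and the $z$-term is inactive in the base. For $T = K_m$ with $m \le k-2$ the desired inequality $m(m-1) \le (k-3+p)m$ holds trivially. For the sole single-block case with $K_{k-1} \subseteq T$, namely $T = K_{k-1}$, the inequality reduces to $(p+h)(k-1) + f \ge k-1$; the algebraic identity $(3k-5) + (k-4)(k-2) = D$ makes this hold with equality, and this is what pins down the constants. For odd cycles $C_{2\ell+1}$ the required bound simplifies to $2 \le k-3+p$, immediate for $k \ge 5$.

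For the inductive step, I would pick a leaf block $B$ of $T$ with cut vertex $v$ and set $T' = T - (V(B) \setminus \{v\})$. Then $\card{T} - \card{T'} = \card{B}-1$ and $\size{T} - \size{T'} = \size{B}$; the quantity $q(T) - q(T')$ is $\card{B}-1$ if $B = K_{k-1}$ and $0$ otherwise (allowing for a nonpositive correction from $v$'s changing cut-status, which only helps), and $\beta(T) - \beta(T') \le \beta^*(B,v)$, where $\beta^*(B,v)$ is the independence number of the subgraph of $B$ induced on vertices that are degree-$(k-1)$ in $T$. Applying the induction hypothesis to $T'$ and combining with the base-case inequalities reduces the whole claim to a finite collection of polynomial inequalities in $k$ after clearing $D$, each verified directly.

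The main obstacle will be the $\beta$-accounting across gluings: attaching $B$ at $v$ can promote $v$ (and possibly other vertices of $B$) to degree $k-1$ in $T$ even though they had smaller degree in $T'$ or $B$ separately, so care is needed to avoid over- or under-counting degree-$(k-1)$ vertices that straddle the gluing. The value $z = (3k-5)(k-2)/D$ is calibrated precisely so that the extreme case --- a $K_{k-1}$ leaf block attached at a vertex $v$ that is thereby promoted to degree $k-1$ in $T$ --- balances against the already-tight charge from the $K_{k-1}$ base case; the remaining leaf-block types then yield slack for all $k \ge 5$.
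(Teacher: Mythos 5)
Your overall architecture---block-tree induction with single-block base cases and an inductive step that strips an endblock while tracking the changes in $q$ and $\beta$---is the same as the paper's: the paper packages exactly this as two minimal-counterexample lemmas (Lemmas \ref{nokkm1} and \ref{bwkk1}) giving sufficient linear conditions on $(p,h,z,f)$, after which Lemma \ref{Gallai6Up} is a numerical check of those conditions. Your base cases are correct, and your identity $(3k-5)+(k-4)(k-2)=k^2-3k+3$ is precisely the equality case of condition (1) of Lemma \ref{bwkk1}.

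The gap is in the inductive step. First, a direction issue: since $h(k),z(k)\ge 0$, what you need from the step $T\mapsto T'$ are \emph{lower} bounds on $q(T)-q(T')$ and $\beta(T)-\beta(T')$, not the upper bounds you state; a ``nonpositive correction'' to $q(T)-q(T')$ hurts rather than helps (it is what forces condition (3) of Lemma \ref{bwkk1}). Second, and fatally, deleting a single leaf block does not close the induction for this quadruple. In the tight configurations (chains of $K_{k-2}$'s whose cutvertices also lie on odd cycles), removing one $K_{k-2}$ endblock $B$ at its cutvertex $x_B$ requires $(k-3)\parens{1-p(k)}\le z(k)\parens{\beta(T)-\beta(T')}$, and you cannot guarantee $\beta(T)-\beta(T')\ge 1$: the only degree-$(k-1)$ vertex affected is $x_B$, and a maximum independent set of the degree-$(k-1)$ subgraph of $T$ may simply avoid $x_B$ (its cycle-neighbours keep degree $k-1$ in $T'$), giving $\beta(T)=\beta(T')$. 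Since $p(k)=\frac{3k-5}{k^2-3k+3}<1$, that case kills the step. The paper's fix (Claim 5 in Lemmas \ref{nokkm1} and \ref{bwkk1}) is to choose an endblock at the end of a longest path in the block tree and delete it \emph{together with} either a degree-$2$ neighbour of $x_B$ on the adjoining odd cycle or a sibling $K_{k-2}$ endblock sharing that cycle; this forces $\beta$ to drop strictly and improves the edge-per-vertex ratio. Without some such device, your ``finite collection of polynomial inequalities'' will contain one that is false, so the calibration of $z$ you describe cannot be realized by single-block removal alone.
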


\noindent Now on to the sufficiency proof.  For an endblock $B$ of a Gallai tree $T$, let $x_B$ be the cutvertex contained in $B$.

\begin{lem}\label{nokkm1}
	Let $\func{z}{\IN}{\IR}$ such that $z(k) = 0$ or $z(k) \ge 2$ for all $k \in \IN$.
	For all $k \ge 5$ and Gallai trees $T$ with $\Delta(T) \le k-1$ and $K_{k-1} \not \subseteq T$, we have
	\[2\size{T} \le \parens{k-3 + \frac{\max\set{2, 3-z(k)}}{k-2}}\card{T} + z(k)\beta(T).\]
\end{lem}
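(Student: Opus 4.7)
The plan is to follow the discharging framework of Cranston and R.~\cite{DischargingLowerBound}, modified to incorporate the $\beta(T)$ term. Set $\epsilon \DefinedAs \frac{\max\set{2, 3-z(k)}}{k-2}$ and pick a maximum independent set $I$ of $T$. Assign each vertex $v$ an initial charge $\mu(v) \DefinedAs \deg_T(v)$ and a target charge $\nu(v) \DefinedAs (k-3+\epsilon) + z(k)\cdot\mathbf{1}[v \in I]$. Then $\sum_v \mu(v) = 2\size{T}$ and $\sum_v \nu(v) = (k-3+\epsilon)\card{T} + z(k)\beta(T)$, so the lemma reduces to exhibiting a discharging under which every vertex ends with charge at most $\nu(v)$.

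Only cutvertices of $T$ outside $I$ can have initial excess: such a vertex has degree at most $k-1$ and target $k-3+\epsilon$, for excess at most $2-\epsilon$. Non-cutvertex vertices have degree at most $k-3$ in a $K_j$-block with $j \le k-2$, or exactly $2$ in an odd-cycle block---both below target, with slack $k-j-2+\epsilon$ or $k-5+\epsilon$ respectively, plus an extra $z(k)$ of slack if the vertex lies in $I$. The discharging rule sends each cutvertex's excess along the edges of its blocks to non-cutvertex vertices of those same blocks, using the block-level identities $\beta^+(T) = \beta^+(T') + \beta(B - N_B[x_B])$ and $\beta^-(T) = \beta^-(T') + \beta(B - x_B)$ to control how $I$ interacts with block removal.

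The main obstacle is the global verification that the absorbed charge does not exceed available slack. This is not a purely local check (the block $K_3$ in a triangle with three pendant edges has no internal slack at all), but a global accounting that exploits the tree structure of the block-cut tree: the choice of $I$ is coordinated with this structure so that the $z(k)$ bonus lands on non-cutvertex vertices of blocks most in need of absorption capacity, while the $(k-j-2+\epsilon)$ and $(k-5+\epsilon)$ slacks carried by pendant edges and short paths redistribute excess pushed in from large $K_{k-2}$ endblocks. The extremal configurations that pin down $\epsilon$ are chains of $K_3$ blocks when $k=5$ and $z(k)=0$ (forcing $\epsilon \ge 3/(k-2)$) and $C_5$-endblock constructions when $z(k) \ge 2$ (forcing $\epsilon \ge 2/(k-2)$), together accounting for the $\max\set{2, 3-z(k)}$ form in the statement.
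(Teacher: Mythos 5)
There is a genuine gap here, in fact two. First, your framework is never executed: you set up initial and target charges and then state that ``the main obstacle is the global verification that the absorbed charge does not exceed available slack,'' correctly observe that this check is not local, and then describe only in general terms what a successful accounting would have to accomplish --- without giving any discharging rules or performing any verification. The undefined quantities $\beta^+$, $\beta^-$ and the unproved ``block-level identities'' do not fill this hole; the entire content of the lemma lives in exactly the step you defer. For comparison, the paper sidesteps the global-accounting problem by taking a counterexample $T$ minimizing $\card{T}$ and pruning endblocks: it shows every endblock must be a $K_{k-2}$ whose cutvertex $x_B$ has degree $k-1$, takes an endblock $B$ at the end of a longest path of the block-tree, and deletes $V(B)$ together with either a degree-$2$ neighbor $y$ of $x_B$ on the adjacent odd cycle or a sibling $K_{k-2}$ endblock, checking in each case that the deletion decreases $\beta$ by at least one --- which is where the $z(k)$ term is earned --- and that the edge count removed exceeds what minimality allows.

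Second, you have misread $\beta$. In this paper $\beta(T)$ is not $\alpha(T)$: it is the independence number of the subgraph of $T$ induced on the vertices of degree $k-1$ in $T$. Taking $I$ to be a maximum independent set of all of $T$ gives $\sum_v \nu(v) = (k-3+\epsilon)\card{T} + z(k)\alpha(T)$, and since $\alpha(T) \ge \beta(T)$, the inequality your scheme would establish is strictly weaker than the one claimed whenever $z(k) \ge 2$ (the implication runs the wrong way). Worse, under the hypothesis $K_{k-1} \not\subseteq T$ with $k \ge 5$, every non-cutvertex lies in a complete block $K_j$ with $j \le k-2$ or in an odd cycle, hence has degree at most $k-3$ or $2$; so the only vertices eligible to contribute to $\beta(T)$ are cutvertices of full degree $k-1$. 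Your plan to have ``the $z(k)$ bonus land on non-cutvertex vertices of blocks most in need of absorption capacity'' is therefore incompatible with the definition: the bonus may only be claimed at an independent set of degree-$(k-1)$ cutvertices, which is precisely why the paper tracks $\beta(T)-\beta(T')$ across endblock deletions rather than distributing it freely.
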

\begin{proof}
Suppose the lemma is false and choose a counterexample $T$ minimizing $\card{T}$.  

\noindent\claim{1}{$T$ has at least two blocks.}

If $T$ has only one block, then $2\size{T} \le \parens{k-3}\card{T}$.

\noindent\claim{2}{Each endblock of $T$ is $K_{k-2}$.}

Suppose $T$ has an endblock $B$ that is not $K_{k-2}$. Then removing $V(B)\setminus\set{x_B}$ from $T$ to get $T'$ and applying minimality of $\card{T}$ gives
\[2\size{B} > \parens{k-3 + \frac{\max\set{2, 3-z(k)}}{k-2}}\parens{\card{B} - 1}.\]
This is a contradiction unless $k=5$ and $B = K_3$, but then $B = K_{k-2}$, a contradiction.

\noindent\claim{3}{If $B$ is an endblock of $T$, then $d_T(x_B) = k-1$.}

Suppose $B$ is an endblock of $T$ with $d_T(x_B) < k-1$.  Then $B = K_{k-2}$ by Claim 2 and hence $d_T(x_B) = k-2$.  Removing $V(B)$ from $T$ to get $T^*$ and applying minimality of $\card{T}$ gives
the contradiction
\[(k-2)(k-3) + 6 > \parens{k-3 + \frac{\max\set{2, 3-z(k)}}{k-2}}\parens{k-1}.\]

\noindent\claim{4}{$T$ does not exist.}

By the previous claims, we know that every endblock $T$ is a $K_{k-2}$ that shares a vertex with an odd cycle.  Pick and endblock $B$ that is the end of a
longest path in the block-tree of $T$.  Let $C$ be the odd cycle sharing $x_B$ with $B$.  Since $B$ is the end of a longest path in the block-tree, there is
a neighbor $y$ of $x_B$ on $C$ such that $d_T(y) = 2$ or $y$ is contained in another endblock $A$ (which must be a $K_{k-2}$).  
First, suppose $d_T(y) = 2$.
Removing $V(B) \cup \set{y}$ from $T$ to get $T'$ and applying minimality of $\card{T}$ gives the contradiction (since $\beta(T') < \beta(T)$)
\[(k-2)(k-3) + 6 > \parens{k-3 + \frac{\max\set{2, 3-z(k)}}{k-2}}\parens{k-1} + z(k)(\beta(T) - \beta(T')).\]
Hence $y$ is contained in another $K_{k-2}$ endblock $A$.  Removing $V(B) \cup V(A)$ from $T$ to get $T^*$ and applying minimality of $\card{T}$ gives
the contradiction (since $\beta(T^*) < \beta(T)$)
\[2(k-2)(k-3) + 6 > \parens{k-3 + \frac{\max\set{2, 3-z(k)}}{k-2}}\parens{2(k-2)} + z(k)(\beta(T) - \beta(T^*)).\]
\end{proof}

\begin{lem}\label{bwkk1}
	Let $\func{p}{\IN}{\IR_{\ge 0}}$, $\func{f}{\IN}{\IR}$, $\func{h}{\IN}{\IR_{\ge 0}}$, $\func{z}{\IN}{\IR_{\ge 0}}$ such that $z(k) = 0$ or $z(k) \ge 2$.
	For all $k \ge 5$ and Gallai trees $T \ne K_k$ with $\Delta(T) \le k-1$ and $K_{k-1} \subseteq T$, we have
	\[2\size{T} \le (k-3 + p(k))\card{T} + f(k) + h(k)q(T) + z(k)\beta(T)\]
	whenever $p$, $f$, $h$ and $z$ satisfy all of the following conditions:
	\begin{enumerate}
		\item[(1)] $f(k) \ge (k-1)(1- p(k) - h(k))$; and	
	    \item[(2)] $p(k) \ge \frac{3 - \frac{z(k)}{2}}{k-2}$; and
		\item[(3)] $p(k) \ge h(k) + 5 - k$; and
		\item[(4)] $p(k) \ge \frac{2+h(k)}{k-2}$; and
		\item[(5)] $(k-1)p(k) + (k-3)h(k) + z(k)\ge k+1$.
	\end{enumerate}
\end{lem}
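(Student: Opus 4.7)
The plan is a minimum-counterexample argument along the lines of Lemma \ref{nokkm1}. Suppose $T$ is a counterexample minimizing $\card{T}$. If $T$ has only one block, then $T = K_{k-1}$ since $K_{k-1} \subseteq T \ne K_k$, so $\beta(T) = 0$, $q(T) = k-1$, and the desired inequality collapses to $f(k) \ge (k-1)(1 - p(k) - h(k))$, which is exactly condition (1). Hence I may assume $T$ has at least two blocks and fix an endblock $B$ with cutvertex $x_B$.

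Since $T$ is a Gallai tree, $B$ is either a clique $K_j$ with $2 \le j \le k-1$ or an odd cycle. In each subcase I would remove $V(B) \setminus \{x_B\}$, or (following Claim 4 in the proof of Lemma \ref{nokkm1}) a pair of neighboring endblocks when the cycle case demands it, producing a smaller Gallai tree $T'$ with $\Delta(T') \le k-1$. Minimality supplies an upper bound on $2\size{T'}$ via the present lemma when $K_{k-1} \subseteq T'$ and $T' \ne K_k$, via Lemma \ref{nokkm1} when $K_{k-1} \not\subseteq T'$, and via the base-case computation of condition (1) when $T'$ has shrunk to $K_{k-1}$. Adding the removed vertices, edges, $q$-mass, and $\beta$-mass to that bound should, in each subcase, strictly violate the inequality that the counterexample $T$ was assumed to satisfy. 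The expected matching between endblock types and conditions is: (2) for an odd-cycle endblock, (4) for a clique endblock $K_j$ with $j < k-1$, (3) for a $K_{k-1}$ endblock attached at a degree-$(k-1)$ cutvertex, and (5) for the extremal situation where $T$ is two $K_{k-1}$'s sharing a single vertex.

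The delicate bookkeeping is in computing $q(T) - q(T')$ and $\beta(T) - \beta(T')$. Non-cutvertices of $T$ in the removed portion simply drop out of $q$, and each removed vertex of degree $k-1$ drops $\beta$ by at most one; but the cutvertex $x_B$ may become a non-cutvertex of $T'$ lying in a newly exposed copy of $K_{k-1}$, which actually \emph{increases} $q(T')$ beyond the naive count. This jump is what the coefficients on $h(k)$ in conditions (3) and (5) are calibrated to absorb; in particular the factor $(k-3)$ (rather than $(k-2)$) on $h(k)$ in condition (5) is the fingerprint of this phenomenon in the two-$K_{k-1}$ configuration.

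The main obstacle will be pinning down this $q$-accounting uniformly across all endblock configurations, carefully distinguishing whether $x_B$ becomes non-cut in $T'$ and whether it ends up inside a $K_{k-1}$ of $T'$. Once that is handled cleanly, the remaining work is routine algebraic verification of each of (2)--(5), essentially replaying the case analysis from \cite{DischargingLowerBound} with the new $z(k)\beta$ term tracked alongside.
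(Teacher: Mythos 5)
Your skeleton---minimal counterexample, base case $T=K_{k-1}$ settled by condition (1), then endblock surgery with careful tracking of how $q$ and $\beta$ change---is exactly the paper's approach, and your observation that the cutvertex $x_B$ can become a non-cut vertex lying in a newly exposed $K_{k-1}$ (whence the coefficient $k-3$ rather than $k-2$ on $h(k)$ in (5)) is indeed the delicate point. But the proposal stops short of the one case that requires an actual idea, and your proposed matching of conditions to configurations suggests the reductions have not been run: in the paper, (3) disposes of $K_2$ endblocks (not of $K_{k-1}$ endblocks at degree-$(k-1)$ cutvertices); odd-cycle endblocks of length at least $5$ die by a direct edge count, with (4) needed only at $k=5$ to force length $3$ (i.e.\ $K_{k-2}$); (4) also kills $K_{k-2}$ endblocks whose cutvertex has degree less than $k-1$; and (5) kills a second $K_{k-1}$ endblock (they need not share a vertex).

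What survives these easy removals is a tree in which every endblock, except possibly one $K_{k-1}$, is a $K_{k-2}$ whose cutvertex $x_B$ has degree exactly $k-1$ and therefore also lies on an odd cycle. Here removing $V(B)\setminus\set{x_B}$ yields no contradiction and, crucially, need not decrease $\beta$, so the $z(k)\beta(T)$ term---the whole point of the lemma---never enters the inequality. The paper's resolution is to choose $B$ at the end of a longest path in the block tree; then the odd cycle through $x_B$ contains a neighbor $y$ of $x_B$ with either $d_T(y)=2$ or $y$ in a second $K_{k-2}$ endblock $A$, and one removes $V(B)\cup\set{y}$ or $V(B)\cup V(A)$. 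Either removal strictly decreases $\beta$ while leaving $q$ unchanged, and the resulting numerical inequalities are precisely what condition (2)---the only condition besides (5) in which $z(k)$ appears---is calibrated to contradict. You allude to removing ``a pair of neighboring endblocks when the cycle case demands it,'' but you assign (2) to odd-cycle endblocks and never explain how $\beta$ is forced to drop; until this terminal case is written out, the proof is incomplete.
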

\begin{proof}
Suppose the lemma is false and choose a counterexample $T$ minimizing $\card{T}$.  

\noindent\claim{1}{$T$ has at least two blocks.}

Otherwise, $T = K_{k-1}$ and (1) gives a contradiction.

\noindent\claim{2}{Each endblock of $T$ is $K_{k-2}$ or $K_{k-1}$.}

Suppose $T$ has an endblock $B$ that is not $K_{k-2}$ or $K_{k-1}$. Then removing $V(B)\setminus\set{x_B}$ from $T$ to get $T'$ and applying minimality of $\card{T}$ gives
\[2\size{B} > \parens{k-3 + p(k)}\parens{\card{B} - 1} + h(k)(q(T) - q(T')) + z(k)(\beta(T) - \beta(T')).\]
If $B = K_2$, then $q(T') \le q(T) + 1$, otherwise $q(T') = q(T)$.  For $B=K_2$, we have to contradiction (to (3))
\[2 > \parens{k-3 + p(k)} - h(k).\]
Suppose $B = K_t$ for $4 \le t \le k-3$.  Then we have the contradiction
\[t(t-1) > \parens{k-3 + p(k)}\parens{t-1}.\]
Finally, suppose $B$ is an odd cycle of length $\ell$.  Then, we have
\[2\ell > \parens{k-3 + p(k)}\parens{\ell-1}.\]
This simplifies to
\[\ell < 1 + \frac{2}{k-5+p(k)}.\]
Since $k-5+p(k) \ge 1$ when $k \ge 6$, this implies that $k = 5$.  Using (4), we conclude $\ell = 3$, but then $B = K_{k-2}$, a contradiction.

\noindent\claim{3}{$T$ has at most one $K_{k-1}$ endblock.}

Suppose $T$ has at least two $K_{k-1}$ endblocks.  Let $B$ be one of them.    Then removing $V(B)$ from $T$ leaves a graph $T'$ with $K_{k-1} \subseteq T'$.  So, we may apply minimality of $\card{T}$ to get
\[(k-1)(k-2) + 2 > \parens{k-3 + p(k)}\parens{k-1} + h(k)(q(T) - q(T')) + z(k)(\beta(T) - \beta(T')).\]
Now $\beta(T') < \beta(T)$ and $q(T') \le q(T) - (k-2) + 1$, so we have the contradiction (to (5))
\[k+1 > (k-1)p(k) + (k-3)h(k) + z(k).\]

\noindent\claim{4}{If $B$ is an endblock of $T$, then $d_T(x_B) = k-1$.}

Suppose $B$ is an endblock of $T$ with $d_T(x_B) < k-1$.  Then $B = K_{k-2}$ by Claim 2.  Removing $V(B)$ from $T$ leaves a graph $T'$ with $K_{k-1} \subseteq T'$.  So, we may apply minimality of $\card{T}$ to get
\[(k-2)(k-3) + 2 > \parens{k-3 + p(k)}\parens{k-2} + h(k)(q(T) - q(T')) + z(k)(\beta(T) - \beta(T')).\]
We have $q(T') \le q(T) + 1$, so this is gives the contradiction (to (4))
\[2 > (k-1)p(k) - h(k).\]

\noindent\claim{5}{$T$ does not exist.}

By Claims 2 and 3, all but at most one endblock of $T$ is $K_{k-2}$ with a cutvertex that is also in an odd cycle. 
Pick and endblock $B$ that is the end of a
longest path in the block-tree of $T$.  Let $C$ be the odd cycle sharing $x_B$ with $B$.  Since $B$ is the end of a longest path in the block-tree, there is
a neighbor $y$ of $x_B$ on $C$ such that $d_T(y) = 2$ or $y$ is contained in another endblock $A$ (which must be a $K_{k-2}$).  
First, suppose $d_T(y) = 2$.
Removing $V(B) \cup \set{y}$ from $T$ to get $T'$ and applying minimality of $\card{T}$ gives (since $q(T') = q(T)$ and $\beta(T') < \beta(T)$)
\[(k-2)(k-3) + 6 > \parens{k-3 + p(k)}\parens{k-1} + z(k),\]
so
\[p(k) < \frac{9-k-z(k)}{k-1},\]
contradicting (2).
Hence $y$ is contained in another $K_{k-2}$ endblock $A$.  Removing $V(B) \cup V(A)$ from $T$ to get $T^*$ and applying minimality of $\card{T}$ gives(since $q(T^*) = q(T)$ and $\beta(T^*) < \beta(T)$)
\[2(k-2)(k-3) + 6 > \parens{k-3 + p(k)}\parens{2(k-2)} + z(k),\]
so
\[6 > 2(k-2)p(k) + z(k),\]
contradicting (2).
\end{proof}

The proof of Lemma \ref{Gallai7Up} and Lemma \ref{Gallai6Up} are now straightforward computations.  
That is all we need to prove our lower bounds on average degree. If a good upper bound on $c^*(\L)$ is known, it may be better to allow $2h(k) + f(k) > 0$.
In that case, one could use the following.

\begin{lem}
If $\func{z}{\IN}{\IR}$ is such that $z(k) = 0$ or $2 \le z(k) \le \frac{k(k-3)}{k-2}$  for all $k \in \IN$, then 
$(p,h,z,f)$ is $5$-Gallai, where
\[h(k) \DefinedAs \frac{k(k-3) - (k-2)z(k)}{k^2-4k+5},\]
\[p(k) \DefinedAs \frac{2 + h(k)}{k-2},\]
\[f(k) \DefinedAs (k-1)(1 - h(k) - p(k)).\]
\end{lem}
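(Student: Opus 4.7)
The plan is to verify the definition of \emph{$5$-Gallai} directly by appealing to the two structural results already established: Lemma \ref{nokkm1} (for Gallai trees without $K_{k-1}$) and Lemma \ref{bwkk1} (for Gallai trees containing $K_{k-1}$). The definitions of $p$ and $f$ are engineered so that conditions (4) and (1) of Lemma \ref{bwkk1} hold as equalities by construction, so what remains is to check condition (5) of Lemma \ref{bwkk1}, then conditions (2) and (3), and finally the single hypothesis $p(k) \ge \max\{2,\,3-z(k)\}/(k-2)$ needed to invoke Lemma \ref{nokkm1}.

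For condition (5), the key observation is that $h$ is defined precisely to force equality. Clearing the $k-2$ denominator in $(k-1)p(k) + (k-3)h(k) + z(k) \ge k+1$ and substituting $p(k) = (2+h(k))/(k-2)$ collapses the coefficient of $h(k)$ to $k^2 - 4k + 5$ and leaves the inequality $h(k)(k^2-4k+5) \ge k(k-3) - (k-2)z(k)$, which is in fact the defining equation for $h(k)$. For the Lemma \ref{nokkm1} hypothesis I rewrite the required inequality as $h(k) \ge \max\{0,\,1 - z(k)/2\}$ using $p(k)(k-2) = 2 + h(k)$ (replacing $3-z(k)$ with $3-z(k)/2$ after the substitution is just a factor-of-$2$ calculation I have to be careful with). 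When $z(k) \ge 2$ this reduces to $h(k) \ge 0$, which is equivalent to the upper bound $z(k) \le k(k-3)/(k-2)$ in the hypothesis; when $z(k)=0$ it reduces to $h(k) \ge 1$, which is equivalent to $k \ge 5$.

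Conditions (2) and (3) are routine algebraic checks once $p(k)$ is eliminated. For (2), $p(k) \ge (3 - z(k)/2)/(k-2)$ simplifies to $h(k) \ge 1 - z(k)/2$, and substituting the formula for $h$ and clearing denominators reduces it to $z(k)(k-3)^2 \ge 2(5-k)$, trivially true for $k \ge 5$. For (3), $p(k) \ge h(k) + 5 - k$ simplifies to $(k-3)h(k) \le (k-3)(k-4)$, i.e. $h(k) \le k-4$, which after substitution factors as $(k-2)z(k) \ge -(k-5)(k-2)^2$, again trivially true.

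There is no real obstacle in this proof: every step is a short algebraic verification, and the only thing to be careful about is matching the cases $z(k)=0$ and $z(k) \ge 2$ (needed because Lemmas \ref{nokkm1} and \ref{bwkk1} both assume this dichotomy) with the two branches in the hypothesis. The proof is in essence the bookkeeping that confirms the formulas for $h$, $p$, $f$ were chosen by solving the system obtained by turning conditions (1), (4), and (5) into equalities, leaving exactly enough slack for (2), (3), and the no-$K_{k-1}$ case to hold for all admissible $z(k)$.
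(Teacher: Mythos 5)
Your proof is correct and is exactly the intended argument: the paper gives no explicit proof of this lemma, leaving it as the ``straightforward computation'' of checking that the quadruple satisfies the hypotheses of Lemma \ref{nokkm1} and conditions (1)--(5) of Lemma \ref{bwkk1}, with (1), (4), (5) holding by construction and (2), (3) and the Lemma \ref{nokkm1} hypothesis reducing to the stated trivial inequalities. The one blemish is your reduction of the Lemma \ref{nokkm1} hypothesis to $h(k) \ge \max\set{0, 1 - z(k)/2}$, whereas $p(k) \ge \max\set{2, 3 - z(k)}/(k-2)$ actually reduces to $h(k) \ge \max\set{0, 1 - z(k)}$; since $z(k) = 0$ or $z(k) \ge 2$, the two expressions agree ($1$ in the first case, $0$ in the second), so your conclusion is unaffected.
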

\section{Bounding $q(\L)$}
This section is devoted to extracting the reusable Lemma \ref{qLemma} from the proof of Kierstead and R. \cite{OreVizing}.
All of the hard work was already done in \cite{OreVizing}.

\begin{defn}
	A graph $G$ is \emph{AT-reducible} to $H$ if $H$ is a nonempty induced subgraph of $G$ which is $f_H$-AT where $f_H(v) \DefinedAs \delta(G) + d_H(v) - d_G(v)$ for all $v \in V(H)$.  
	If $G$ is not AT-reducible to any nonempty induced subgraph, then it is \emph{AT-irreducible}.
\end{defn}

\begin{lem}\label{qLemma}
	Let $G$ be a non-complete AT-irreducible graph with $\delta(G) = k-1$ where $k \ge 5$.  Let $\L$ be the subgraph of $G$ induced on $(k-1)$-vertices, $\HH^-$ the subgraph of $G$ induced on $k$-vertices and 
	$\HH^+$ the subgraph of $G$ induced on $(k+1)^+$-vertices.  Then
	\[q(\L) \le c^*(\L) + 4\card{\HH^-} + \size{\HH^+, \L},\] and if $k \ge 7$, then
	\[q(\L) \le 2c^*(\L) + 3\card{\HH^-} + \size{\HH^+, \L}.\]
\end{lem}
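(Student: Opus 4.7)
My plan is to prove the lemma by a charging argument, distributing each vertex counted by $q(\L)$ among the three terms on the right-hand side and using AT-irreducibility of $G$ to cap the multiplicities. If $v \in V(\L)$ is counted by $q(\L)$, then $v$ is a non-cut vertex of its component $D \subseteq \L$ and lies in some $K_{k-1}$-block $B$ of $D$. Since all $k-2$ of $v$'s $\L$-neighbors sit inside $B$ and $d_G(v) = k-1$, $v$ has a unique neighbor $w_v \in V(\HH^-) \cup V(\HH^+)$. Charge every such $v$ with $w_v \in V(\HH^+)$ to the edge $v w_v$; distinct $v$'s give distinct edges, so this handles at most $\size{\HH^+, \L}$ vertices and accounts for the third term. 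What remains is to charge those $v$ with $w_v \in V(\HH^-)$.

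Here the AT-irreducibility of $G$ is essential. The content I would extract from Kierstead and R.\ \cite{OreVizing} amounts to two reducibility statements: a per-$k$-vertex cap of $4$ on $\card{\{v : w_v = u\}}$ for each $u \in V(\HH^-)$ (respectively a cap of $3$ when $k \ge 7$), and a per-component residue of at most $1$ vertex (respectively $2$ vertices when $k \ge 7$) left over from each component of $\L$ containing a copy of $K_{k-1}$. Violating either cap would produce a specific induced subgraph $H$ of $G$, built from $B$ together with $u$ and possibly an adjacent block, whose demand function $f_H(v) = \delta(G) + d_H(v) - d_G(v)$ admits an Alon--Tarsi orientation, contradicting AT-irreducibility. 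Summing the three types of charges, the contributions to $\HH^-$-vertices total at most $4\card{\HH^-}$ (resp.\ $3\card{\HH^-}$), the residues total at most $c^*(\L)$ (resp.\ $2c^*(\L)$), and adding the $\HH^+$-edge term yields the two claimed inequalities.

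The entire substantive work lies in those AT-reducibility statements, which is precisely what is already carried out in \cite{OreVizing}. My plan is not to reconstruct the Alon--Tarsi orientations from scratch but to cite the relevant lemmas as a black box and verify that the charging scheme above faithfully encodes them. The main obstacle is bookkeeping: ensuring each vertex in $q(\L)$ is charged exactly once, and tracing carefully why the improved constants for $k \ge 7$ come from the extra slack available when the $K_{k-1}$-block is large enough that removing a bounded number of its vertices still leaves an $f_H$-AT configuration. Once this alignment is made explicit, the two inequalities fall out directly.
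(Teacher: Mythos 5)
There is a genuine gap: the reducibility lemmas from \cite{OreVizing} do not yield the local caps your charging scheme relies on, and those caps are in fact false. You assert a ``per-$k$-vertex cap of $4$'' (resp.\ $3$) on $\card{\set{v : w_v = u}}$ for $u \in V(\HH^-)$. But the single-high-vertex reducible configuration (Lemma \ref{ConfigurationTypeOneEuler}) only forbids a $k$-vertex $u$ from having at least $t+2$ neighbors inside $t$ components of $\L$ each of which it meets in $W^k$; it says nothing against $u$ being adjacent to one non-cut $K_{k-1}$-vertex in each of $k$ \emph{distinct} components of $\L$. Such a $u$ has $\card{\set{v : w_v = u}} = k > 4$ and creates no reducible configuration by itself. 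The configurations that actually control this situation (Lemmas \ref{MultipleHighConfigurationEuler} and \ref{MultipleHighConfigurationEulerLopsided}) require \emph{several} $k$-vertices and \emph{several} components simultaneously, with a minimum-degree condition on the incidence bipartite graph $\B_k$; no single vertex or single component can be blamed, so no per-vertex or per-component charge bound can be extracted. Likewise your ``per-component residue of at most $1$'' has no source in the cited lemmas.

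The paper's proof is necessarily global: it builds an auxiliary bipartite graph $F$ on $B = V(\HH^-)$ versus $\D$ (plus padding classes $A_2, A_3$), gets a \emph{lower} bound $\size{F} \ge (k-1)\card{B}$ from Lemma \ref{ConfigurationTypeOneEuler}, and gets an \emph{upper} bound $\size{F} \le \sum_v f(v)$ from the degeneracy statement (Lemma \ref{DegenerateEuler}): if the upper bound failed, $F$ would contain an induced subgraph of large minimum degree, which pulls back to a configuration satisfying the hypotheses of Lemma \ref{MultipleHighConfigurationEulerLopsided} (or \ref{MultipleHighConfigurationEuler} for $k \ge 7$), contradicting AT-irreducibility. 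The two bounds on $\size{F}$ are then combined and the inequality is only true \emph{on average} over $\HH^-$ and $\D$. Your treatment of the $\size{\HH^+,\L}$ term (each counted vertex has a unique neighbor outside $\L$) is correct and matches the paper's accounting, but the $\HH^-$ and $c^*(\L)$ terms cannot be obtained by the local distribution you describe; you would need to replace that step with the degeneracy argument on $F$ (or an equivalent global counting).
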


\begin{observation}
The hypotheses of Lemma \ref{qLemma} are satisfied by non-complete $k$-critical, $k$-list-critical, online $k$-list-critical and $k$-AT-critical graphs.
\end{observation}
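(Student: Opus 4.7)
The plan is to verify the three hypotheses of Lemma \ref{qLemma}---non-completeness, $\delta(G)=k-1$, and AT-irreducibility---for each of the four classes. Non-completeness is assumed. The bound $\delta(G)\ge k-1$ holds in every case by the usual one-line greedy extension: if $d_G(v)<k-1$, take a proper $(k-1)$-coloring, $L$-coloring, online $L$-coloring, or $(k-1)$-AT orientation of the proper subgraph $G-v$ (available by criticality) and extend to $v$, contradicting the non-colorability of $G$. We apply Lemma \ref{qLemma} with this $k$, so $\delta(G)=k-1$ is the case of interest; if $\delta(G)>k-1$ there are no $(k-1)$-vertices and the inequalities are vacuous.

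The substance of the observation is AT-irreducibility. Suppose for contradiction that $G$ is AT-reducible to a nonempty induced subgraph $H$, so that $H$ is $f_H$-AT with $f_H(v)=(k-1)+d_H(v)-d_G(v)$. If $H=G$, then $f_H\equiv k-1$, so $G$ itself is $(k-1)$-AT, hence online $(k-1)$-choosable, hence $(k-1)$-choosable, hence $(k-1)$-colorable; this contradicts the critical hypothesis in each class. So assume $H\subsetneq G$, so $G-V(H)$ is a proper induced subgraph. For the first three classes: fix a bad $(k-1)$-list assignment $L$ (resp.\ online strategy, or simply the constant list $[k-1]$ in the $k$-critical case) witnessing non-choosability of $G$, and $L$-color $G-V(H)$ by applying criticality to the proper subgraph. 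For each $v\in V(H)$, the outside neighbors block at most $d_G(v)-d_H(v)=(k-1)-f_H(v)$ colors, leaving an available list of size $\ge f_H(v)$. Since $H$ is $f_H$-AT it is in particular online $f_H$-choosable, so the coloring extends to $H$, producing an $L$-coloring of $G$ and contradicting non-choosability.

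For $k$-AT-critical $G$: take a $(k-1)$-AT orientation $D_1$ of the proper subgraph $G-V(H)$ (from criticality) and the $f_H$-AT orientation $D_2$ of $H$ given by AT-reducibility. Form an orientation $D$ of $G$ by combining $D_1,D_2$ and directing every cross edge from $V(H)$ into $V(G)\setminus V(H)$. The out-degrees are $d_D^+(u)=d_{D_1}^+(u)\le k-2$ for $u\notin V(H)$, and for $v\in V(H)$,
\[
d_D^+(v)=d_{D_2}^+(v)+(d_G(v)-d_H(v))\le (f_H(v)-1)+(d_G(v)-d_H(v))=k-2.
\]
In the graph polynomial of $G$ under $D$, no cross edge contributes to the out-degree of any vertex outside $V(H)$, so only the expansion choosing $x_v$ at every cross-edge factor $(x_v-x_u)$ can contribute to the out-degree monomial $\prod_w x_w^{d_D^+(w)}$. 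Its coefficient therefore factors as the product of the Alon--Tarsi coefficients of $D_1$ and $D_2$, both nonzero, so $G$ is $(k-1)$-AT, contradicting $k$-AT-criticality.

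The main technical point is this last product factorization of the Alon--Tarsi coefficient; it drops out immediately once all cross edges are oriented uniformly, so that the choices of $x_v$ in the cross factors decouple from the internal AT structure of each side.
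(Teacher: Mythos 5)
The paper gives no proof of this Observation at all---it is treated as folklore, following from the hierarchy of parameters (a graph that is $f$-AT is online $f$-choosable, hence $f$-choosable, hence $f$-colorable) together with the reduction arguments already present in \cite{OreVizing}---so your write-up supplies an argument the paper leaves implicit rather than paralleling a written one. What you wrote is essentially the standard argument and is correct in substance: the greedy bound $\delta(G)\ge k-1$, the case $H=G$ via the hierarchy, the list/ordinary-coloring extension when $H\subsetneq G$, and the $k$-AT-critical case by orienting all cross edges out of $V(H)$. In that last case your quick assertion that only the expansion choosing $x_v$ in every cross factor can contribute is justified by a degree count you should make explicit: every term of $P_{G-V(H)}\cdot P_H\cdot\prod(x_v-x_u)$ has degree $\size{G-V(H)}$ in the outside variables plus the number of cross factors choosing the outside variable, while the target monomial has outside-degree exactly $\size{G-V(H)}=\sum_{u\notin V(H)}d^+_D(u)$; hence every cross factor must choose $x_v$ and the coefficient splits as the product of the two nonzero Alon--Tarsi coefficients. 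Your remark that the case $\delta(G)>k-1$ is vacuous is also the right way to square criticality (which only gives $\delta(G)\ge k-1$) with the hypothesis $\delta(G)=k-1$.

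The one step that does not work as literally stated is the online $k$-list-critical case: you cannot ``first $L$-color $G-V(H)$,'' because Lister, not Painter, controls the order in which vertices are presented. The standard repair is to interleave two strategies: Painter runs the winning strategy for $G-V(H)$ with lists of size $k-1$ (available by criticality) and an online $f_H$-choosability strategy for $H$ (available since $f_H$-AT implies online $f_H$-choosable). In each round with presented set $S$, Painter first answers on $S\setminus V(H)$, obtaining an independent set $I$, and forwards to the $H$-game only those $v\in S\cap V(H)$ having no neighbor in $I$. Each withheld presentation of $v$ colors a distinct outside neighbor of $v$, so $v$ is withheld at most $d_G(v)-d_H(v)$ times and is forwarded at least $(k-1)-(d_G(v)-d_H(v))=f_H(v)$ times, so Painter wins on $G$, the desired contradiction. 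This is a routine fix, but as written your sentence conflates the offline and online games.
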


The proof of Lemma \ref{qLemma} requires the following four lemmas from \cite{OreVizing}.

\begin{lem}\label{DegenerateEuler}
Let $G$ be a graph and $\func{f}{V(G)}{\IN}$.  If $\size{G} > \sum_{v \in V(G)} f(v)$, then $G$ has an induced subgraph $H$ such that $d_H(v) > f(v)$ for each $v \in V(H)$.
\end{lem}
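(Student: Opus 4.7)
The plan is to prove this by a greedy peeling argument: repeatedly delete vertices whose current degree does not exceed their $f$-value, and show the process cannot exhaust the graph because there are not enough ``cheap'' edges to remove.

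More concretely, I would define the following deletion procedure. Set $G_0 \DefinedAs G$, and for $i \ge 0$, if $G_i$ is nonempty and contains a vertex $v_i$ with $d_{G_i}(v_i) \le f(v_i)$, form $G_{i+1}$ by removing $v_i$ from $G_i$; otherwise, stop. When we remove $v_i$, the number of edges destroyed is exactly $d_{G_i}(v_i) \le f(v_i)$. Each vertex of $G$ is deleted at most once, so the total number of edges destroyed across the whole process is at most
\[\sum_{i} f(v_i) \le \sum_{v \in V(G)} f(v) < \size{G},\]
by hypothesis. Therefore the process cannot destroy every edge of $G$, and in particular it cannot terminate with the empty graph.

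Hence the procedure halts at some nonempty induced subgraph $H \DefinedAs G_i$ in which no vertex $v$ satisfies $d_H(v) \le f(v)$. Equivalently, $d_H(v) > f(v)$ for every $v \in V(H)$, which is exactly the required conclusion.

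There is no real obstacle here; the only small subtlety is book-keeping to make sure the inequality stays strict. The strict inequality $\size{G} > \sum_v f(v)$ in the hypothesis is precisely what forces the peeling process to leave behind at least one edge (hence at least two vertices), which in turn guarantees that $H$ is nonempty. If one wanted to phrase the argument without referencing an explicit order of deletions, one could instead consider, over all induced subgraphs $H' \subseteq G$, one that minimizes $\size{H'} - \sum_{v \in V(H')} f(v)$; the hypothesis forces this minimum to be negative, so $H'$ is nonempty, and minimality rules out any vertex with $d_{H'}(v) \le f(v)$.
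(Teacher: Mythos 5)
Your peeling argument is correct and is essentially the paper's proof viewed iteratively: the paper takes a minimal counterexample and deletes a single vertex $x$ with $d_G(x)\le f(x)$, observing that the hypothesis $\size{G}>\sum_v f(v)$ is preserved, which is exactly your invariant that the edges destroyed so far are bounded by the $f$-values of the deleted vertices. Your closing remark about minimizing $\size{H'}-\sum_{v\in V(H')}f(v)$ is also fine, but no further comparison is needed.
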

\begin{proof}
Suppose not and choose a counterexample $G$ minimizing $\card{G}$. Then $\card{G} \ge 3$ and we have $x \in V(G)$ with $d_G(x) \leq f(x)$. But now $\size{G-x} > \sum_{v \in V(G-x)} f(v)$, contradicting minimality of $\card{G}$.
\end{proof}

Let $\T_k$ be the Gallai trees with maximum degree at most $k-1$, excepting $K_k$. For a graph $G$, let $W^k(G)$ be the set of vertices of $G$ that are contained in some $K_{k-1}$ in $G$.  

\begin{lem}\label{ConfigurationTypeOneEuler}
Let $k \ge 5$ and let $G$ be a graph with $x \in V(G)$ such that:
\begin{enumerate}
\item $K_k \not \subseteq G$; and
\item $G-x$ has $t$ components $H_1, H_2, \ldots, H_t$, and all are in $\T_k$; and
\item $d_G(v) \leq k - 1$ for all $v \in V(G-x)$; and
\item $\card{N(x) \cap W^k(H_i)} \ge 1$ for $i \in \irange{t}$; and
\item $d_G(x) \ge t+2$.
\end{enumerate}

\noindent Then $G$ is $f$-AT where $f(x) = d_G(x) - 1$ and $f(v) = d_G(v)$ for all $v \in V(G - x)$.
\end{lem}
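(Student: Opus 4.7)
The plan is to exhibit an orientation $D$ of $G$ with $d^+_D(v) \le f(v) - 1$ for every $v \in V(G)$, satisfying the Alon--Tarsi condition $EE(D) \ne OE(D)$. For each $i \in \irange{t}$, use hypothesis (4) to pick $w_i \in N(x) \cap W^k(H_i)$, and fix a $K_{k-1}$-block $B_i$ of $H_i$ containing $w_i$. Orient the edge $xw_i$ from $x$ to $w_i$ for every $i$, and orient every other edge $xv$ with $v \in V(G-x)$ from $v$ to $x$. Within each $H_i$, root the block-tree at $B_i$ and in each block $B$ let $c_B$ be the cutvertex of $H_i$ in $B$ closest to $B_i$ (with $c_B = w_i$ when $B = B_i$). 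If $B = K_s$ with $s \le k-1$, orient $B$ as a transitive tournament whose unique source is $c_B$. If $B$ is an odd cycle $c_B = u_0, u_1, \ldots, u_{2\ell}$ with $u_0$ adjacent to $u_{2\ell}$, orient $u_0 \to u_1 \to \cdots \to u_{2\ell}$ and $u_0 \to u_{2\ell}$, making $c_B$ the unique source and $u_{2\ell}$ the unique sink of $B$.

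The out-degree verification is routine. At $x$ we have $d^+_D(x) = t \le d_G(x) - 2 = f(x) - 1$ by hypothesis (5). At each $w_i$, the construction makes $w_i$ the unique source of the acyclic orientation of $H_i$, so $d^+_D(w_i) = d_{H_i}(w_i) = d_G(w_i) - 1 = f(w_i) - 1$, since the edge $xw_i$ is oriented into $w_i$ and contributes nothing to $d^+_D(w_i)$. For any $v \in V(H_i) \setminus \set{w_i}$, the rooted block-tree construction guarantees $v$ has an in-neighbor in its parent block of $H_i$, so $d^+_D(v) \le d_{H_i}(v) - 1 \le d_G(v) - 1 = f(v) - 1$; if $v \in N(x)$, the edge $vx$ is oriented toward $x$ and does not contribute to $d^+_D(v)$ either.

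The main obstacle is verifying the Alon--Tarsi condition $EE(D) \ne OE(D)$. Each restriction $D[V(H_i)]$ is acyclic---it is built from acyclic block orientations glued along cutvertices via a rooted block tree---so every directed cycle of $D$ passes through $x$ and has the form $x \to w_i \to \cdots \to v \to x$ for some $i$ and some $v \in R_i \DefinedAs (N(x) \cap V(H_i)) \setminus \set{w_i}$, with the interior a directed $w_i$-to-$v$ path in $D[V(H_i)]$. Consequently every Eulerian subdigraph $E$ of $D$ is parametrized, component by component, by a balanced $\set{0,1}$-flow in $D[V(H_i)]$ with net out-excess $1$ at $w_i$ per out-edge $xw_i \in E$ and matching in-excess on chosen vertices of $R_i$. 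I plan to establish $EE(D) - OE(D) \ne 0$ by constructing a sign-reversing involution on the nonempty Eulerian subdigraphs of $D$. The involution should exploit the DAG structure of each $D[V(H_i)]$ and the chosen orientation of odd-cycle blocks (which gives strong uniqueness properties of directed $w_i$-to-$v$ paths) to pair each nonempty $E$ with another of opposite parity by toggling one canonically chosen edge. Lemma \ref{DegenerateEuler} is likely to be used to bound the edge count in any such balanced subflow, ensuring the involution is fixed-point-free; then $EE(D) - OE(D)$ picks up only the contribution of $E = \emptyset$, which is $1$. Designing the involution and checking its correctness is the technical crux of the proof.
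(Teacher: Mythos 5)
The paper does not actually prove this lemma; it is imported from Kierstead--Rabern \cite{OreVizing}, where it is established by a different route (induction on the structure of $G-x$, using gluing lemmas for Alon--Tarsi orientations at cut vertices and a direct analysis of the base configurations). Your framework --- exhibit an orientation $D$ with $d^+_D(v)\le f(v)-1$ for all $v$ and $EE(D)\ne OE(D)$ --- is the right one, and your out-degree verification is correct. The problem is that the entire content of the lemma sits in the step you defer (``designing the involution is the technical crux''), and that step cannot be completed for the orientation you describe: there are instances of the lemma in which your $D$ satisfies $EE(D)=OE(D)$, so no sign-reversing involution on the nonempty Eulerian subdigraphs can exist.

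Concretely, take $k=5$, $t=1$, $H_1=K_4$ on $\set{a,b,c,d}$ and $N(x)=\set{a,b,c}$, i.e.\ $G=K_5$ minus the edge $xd$; all five hypotheses hold, with $d_G(x)=3=t+2$. Choose $w_1=a$ and orient the block $K_4$ as the transitive tournament with source order $a,b,c,d$. The only out-edge of $x$ is $xa$ and the only in-edges of $x$ are $bx$ and $cx$; since $D[V(H_1)]$ is acyclic, every nonempty Eulerian subdigraph of $D$ is a single directed cycle through the edge $xa$, and these are exactly $xabx$, $xacx$ and $xabcx$. Hence $EE(D)=2$ (the empty subdigraph plus $xabcx$) while $OE(D)=2$, and the Alon--Tarsi condition fails. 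With the source order $a,d,b,c$ instead one gets $EE(D)=4$ and $OE(D)=3$, so the outcome hinges on the linear order inside each complete block, which your construction leaves unspecified --- and some admissible choices provably fail. So the proposal is not a proof: what remains to be done is precisely to choose the orientation (or to argue inductively, as in \cite{OreVizing}) so that $EE(D)-OE(D)$ is guaranteed to be nonzero, and that is the part that is missing.
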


For a graph $G$, $\set{X, Y}$ a partition of $V(G)$ and $k \ge 4$, let $\B_k(X, Y)$ be the bipartite graph with one part $Y$ and the other part the components of $G[X]$.  Put an edge between $y \in Y$ and a component $T$ of $G[X]$ iff $N(y) \cap W^k(T) \ne \emptyset$.   The next lemma tells us that we have a reducible configuration if this bipartite graph has minimum degree at least three.  

\begin{lem}
	\label{MultipleHighConfigurationEuler} Let $k\ge7$ and let $G$ be a graph with
	$Y\subseteq V(G)$ such that: 
	\begin{enumerate}
		\item $K_{k}\not\subseteq G$; and 
		\item the components of $G-Y$ are in $\T_{k}$; and 
		\item $d_{G}(v)\leq k-1$ for all $v\in V(G-Y)$; and 
		\item with $\B\DefinedAs\B_{k}(V(G-Y),Y)$ we have $\delta(\B)\ge3$. 
	\end{enumerate}
	\noindent Then $G$ has an induced subgraph $G'$ that is $f$-AT where $f(y)=d_{G'}(y)-1$
	for $y\in Y$ and $f(v)=d_{G'}(v)$ for all $v\in V(G'-Y)$.\end{lem}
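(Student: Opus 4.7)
The plan is to mirror the Alon--Tarsi orientation argument of Lemma~\ref{ConfigurationTypeOneEuler}, but now handling multiple vertices in $Y$ simultaneously. I want to produce an induced subgraph $G' \subseteq G$ and an orientation $D$ of $G'$ with $d^+_D(v) \le f(v) - 1$ at every vertex and with Alon--Tarsi signature $\mathrm{EE}(D) - \mathrm{EO}(D) \ne 0$, which by the Alon--Tarsi theorem will give that $G'$ is $f$-AT.

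First, I would apply Lemma~\ref{DegenerateEuler} to the bipartite graph $\B$ (or an auxiliary multigraph derived from it), using $\delta(\B) \ge 3$ to pass, if needed, to a sub-bipartite graph $\B^* \subseteq \B$ with $\delta(\B^*) \ge 3$ on both sides; this identifies the active set $Y^* \subseteq Y$ and the participating components $T_1, \ldots, T_s$ of $G-Y$. Set $G' \DefinedAs G[Y^* \cup V(T_1) \cup \cdots \cup V(T_s)]$. The hypothesis $d_G(v) \le k-1$ on $V(G-Y)$ together with the Gallai-tree structure of each $T_i \in \T_k$ forces every non-cut vertex of a $K_{k-1}$-block in $T_i$ to have at most one $Y$-neighbor, while each cut vertex of such a block has none; this rigid local structure is what the orientation step will exploit.

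Next, I would orient each $T_i$ block by block: each $K_m$-block ($m \le k-1$) receives a near-regular tournament orientation so that no vertex has in-block out-degree above $m-2$; each odd-cycle block is oriented so that the two edges incident to any cut vertex share a head. The boundary edges between $V(T_i)$ and $Y^*$ are then oriented so that, for each $y \in Y^*$, at least two of the $\ge 3$ edges guaranteed by $\delta(\B^*) \ge 3$ point into $y$; this achieves $d^+_D(y) \le d_{G'}(y) - 2 = f(y) - 1$. Each non-cut vertex $w$ of a $K_{k-1}$-block adjacent to $Y^*$ has one extra unit of out-degree capacity from its $Y$-edge, enough slack for the in-block bookkeeping to close, and a direct check using $k \ge 7$ confirms $d^+_D(v) \le f(v) - 1$ at every $v \in V(G'-Y)$ as well.

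The main obstacle, and where $k \ge 7$ really bites, is verifying $\mathrm{EE}(D) \ne \mathrm{EO}(D)$. My plan is to show that any non-empty Eulerian subgraph of $D$ must cross between $Y^*$ and more than one component of $G-Y$, and hence projects to a closed walk in $\B^*$; a sign-reversing involution on such walks, or an explicit Alon--Tarsi polynomial computation in the spirit of \cite{OreVizing}, should leave exactly the empty Eulerian subgraph uncanceled, giving $\mathrm{EE}(D) - \mathrm{EO}(D) = \pm 1$. Porting the single-$Y$-vertex parity argument from Lemma~\ref{ConfigurationTypeOneEuler} to this setting is the delicate part, since closed walks can now weave through several vertices of $Y^*$ and several components at once, so care is required to rule out internal cancellations among them; I expect this to be where most of the combinatorial work actually lies.
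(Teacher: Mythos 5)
The paper does not actually prove this lemma: it is imported verbatim from \cite{OreVizing} as one of four black-box reducibility lemmas used in the proof of Lemma \ref{qLemma}. So the real question is whether your sketch could stand on its own as a proof, and it cannot yet: the decisive step is missing. You construct a candidate orientation $D$ and then say that a sign-reversing involution ``should'' leave only the empty Eulerian subgraph uncanceled, and that this is ``where most of the combinatorial work actually lies.'' That is precisely the content of the lemma, and it is deferred rather than carried out. An outline that stops at ``now verify $\mathrm{EE}(D)\ne\mathrm{EO}(D)$'' has not engaged with the hard part of an Alon--Tarsi argument.

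There is also a concrete error in the one structural claim you do make about the Eulerian subgraphs. You assert that any non-empty Eulerian subgraph of $D$ must cross between $Y^*$ and more than one component of $G-Y$. This is false for the orientation you describe: a spanning Eulerian sub-digraph need not be connected, and any directed cycle lying entirely inside a single block is already a non-empty Eulerian subgraph. A near-regular tournament on a $K_{k-1}$-block with $k\ge 7$ contains many directed triangles, so the purely internal contributions to $\mathrm{EE}-\mathrm{EO}$ are plentiful, and controlling their cancellation against the subgraphs that do use $Y$-edges is exactly the difficulty. The argument in \cite{OreVizing} does not proceed by exhibiting one explicit global orientation and computing its Alon--Tarsi difference; it builds the reducible configuration inductively, gluing $f$-AT pieces along cut vertices and using the single-$Y$-vertex lemma (your Lemma \ref{ConfigurationTypeOneEuler}) together with composition lemmas for Alon--Tarsi orientations. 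If you want a self-contained proof, you should either develop those gluing tools or find a genuinely new way to handle the internal Eulerian subgraphs; as written, the proposal's central claim about the structure of Eulerian subgraphs would fail at the first $K_{k-1}$-block.
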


We also have the following version with asymmetric degree condition on $\B$.  
The point here is that this works for $k \ge 5$.  
The consequence is that we trade a bit in our bound for the proof to go through with $k \in \set{5,6}$.

\begin{lem}
	\label{MultipleHighConfigurationEulerLopsided} Let $k \ge 5$ and let $G$ be a graph with
	$Y\subseteq V(G)$ such that: 
	\begin{enumerate}
		\item $K_{k}\not\subseteq G$; and 
		\item the components of $G-Y$ are in $\T_{k}$; and 
		\item $d_{G}(v)\leq k-1$ for all $v\in V(G-Y)$; and 
		\item with $\B \DefinedAs \B_k(V(G-Y), Y)$ we have $d_{\B}(y) \ge 4$ for all $y \in Y$ and $d_{\B}(T) \ge 2$ for all components $T$ of $G-Y$.
	\end{enumerate}
	\noindent Then $G$ has an induced subgraph $G'$ that is $f$-AT where $f(y)=d_{G'}(y)-1$
	for $y\in Y$ and $f(v)=d_{G'}(v)$ for all $v\in V(G'-Y)$.
\end{lem}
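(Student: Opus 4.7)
The plan is to parallel the proof of Lemma \ref{MultipleHighConfigurationEuler} in \cite{OreVizing}, exploiting the asymmetric degree hypothesis on $\B$ to push the argument down from $k \ge 7$ to $k \ge 5$. The extra degree at each $y \in Y$ ($4$ instead of $3$) provides the slack needed to circumvent the low-$k$ structural obstacles in $\T_k$, while $d_\B(T) \ge 2$ guarantees every retained component is touched twice by $Y$.

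First, I would reduce $\B$ to a cleaner bipartite subgraph using Lemma \ref{DegenerateEuler}. Let $c$ be the number of components of $G - Y$. From the hypothesis we have $\size{\B} \ge 4\card{Y}$ and $\size{\B} \ge 2c$, so an application of Lemma \ref{DegenerateEuler} with weights $f_\B(y) = 3$ and $f_\B(T) = 1$ succeeds whenever $\card{Y} > c$ or $c > 3\card{Y}$; the remaining boundary regime can be handled by peeling off a single vertex of low weighted degree and re-applying the lemma. This yields a nonempty $\B^* \subseteq \B$ with $d_{\B^*}(y) \ge 4$ on the retained $Y^* \subseteq Y$ and $d_{\B^*}(T) \ge 2$ on the retained set of components $\fancy{R}$.

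Second, I would lift $\B^*$ to an induced subgraph $G'$ of $G$ on $Y^* \cup \bigcup_{T \in \fancy{R}} V(T)$ and, for each $\B^*$-edge $yT$, choose a witness $v_{y,T} \in N_G(y) \cap W^k(T)$. I would then build an Alon-Tarsi orientation of $G'$ by orienting each Gallai tree $T \in \fancy{R}$ using the standard local orientation from the proof of Lemma \ref{ConfigurationTypeOneEuler}, combined with the in-edges at the chosen witnesses coming from $Y^*$, and orienting the remaining $Y$-to-$T$ edges so that each $y \in Y^*$ achieves out-degree exactly $d_{G'}(y) - 1$. The bounds $d_{\B^*}(y) \ge 4$ and $d_{\B^*}(T) \ge 2$ provide slack both for the orientation at each $y \in Y^*$ and for the local Alon-Tarsi count inside each $T$.

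The main obstacle is the per-component Alon-Tarsi count when $k \in \set{5,6}$. For $k \ge 7$ the trees in $\T_k$ are structurally rich enough that a single in-edge from $Y$ per component suffices, which is why Lemma \ref{MultipleHighConfigurationEuler} only required $\delta(\B) \ge 3$. When $k \in \set{5,6}$, a component $T \in \T_k$ can be as small as a single $K_{k-2}$ or a triangle, and the local count requires two in-edges on appropriate witnesses inside $T$. The hypothesis $d_\B(T) \ge 2$ delivers exactly this, and the verification reduces to a case analysis on the block structure of $T$ (each block a clique or an odd cycle), closely mirroring the case analysis in the proof of Lemma \ref{bwkk1}. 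Assembling the local counts into a global nonzero Alon-Tarsi count completes the proof.
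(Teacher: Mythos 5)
The paper itself does not prove this lemma: it is one of the four results imported verbatim from Kierstead and R.~\cite{OreVizing}, so there is no in-paper argument to compare against, and your attempt has to stand on its own. It does not. Your opening step is vacuous: the hypotheses already assert $d_{\B}(y)\ge 4$ for every $y\in Y$ and $d_{\B}(T)\ge 2$ for every component $T$ of $G-Y$, so there is nothing to extract with Lemma \ref{DegenerateEuler}; and the ``peel off a low-degree vertex in the boundary regime'' patch is not an argument, since Lemma \ref{DegenerateEuler} needs the strict inequality $\size{\B} > 3\card{Y} + c$ and in the regime $\card{Y} \le c \le 3\card{Y}$ nothing you have said produces it. Fortunately the step is also unnecessary, so this is confusion rather than a fatal error.

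The fatal gap is in the second half. The entire content of the lemma is the verification that some induced subgraph $G'$ is $f$-AT, i.e.\ that it admits an orientation with the prescribed out-degrees in which the numbers of even and odd spanning Eulerian subdigraphs differ. You propose to orient each Gallai tree ``using the standard local orientation,'' orient the $Y$-to-$T$ edges so each $y$ has out-degree $d_{G'}(y)-1$, and then ``assemble the local counts into a global nonzero Alon--Tarsi count.'' That assembly is precisely where the work is: the components of $G'-Y$ all interact through the shared vertices of $Y$, so the Eulerian-subdigraph generating function does not factor over components, and you give no mechanism for controlling the cross terms (this is why the actual proof in \cite{OreVizing} is a delicate induction rather than a product of local certificates). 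Your explanation of the degree hypotheses is also backwards: in Lemma \ref{MultipleHighConfigurationEuler} the condition $\delta(\B)\ge 3$ already forces every component to meet $Y$ at least three times, not once; the lopsided version weakens the component condition to $2$ while strengthening the condition on $Y$ to $4$, the opposite of the trade-off your narrative describes. As written, the proposal is a plan whose central step is asserted rather than proved.
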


\begin{proof}[Proof of Lemma \ref{qLemma}]
Let $\HH$ be the subgraph of $G$ induced on $k^+$-vertices and let $\D$ be the components of $\L$ containing a copy of $K_{k-1}$. 
Put $W \DefinedAs W^k(\L)$ and $L' \DefinedAs V(\L) \setminus W$. Define an auxiliary bipartite graph $F$ with parts $A$ and $B$ where:
\begin{enumerate}
\item  $B = V(\HH^-)$ and $A$ is the disjoint union of the following sets
$A_1, A_2$ and $A_3$,
\item $A_1 = \D$ and each $T \in \D$ is adjacent to all $y \in B$
where $N(y) \cap W^k(T) \ne \emptyset$,
\item For each $v \in L'$, let $A_2(v)$ be a set of $\card{N(v) \cap
B}$ vertices connected to $N(v) \cap B$ by a matching in $F$.  Let
$A_2$ be the disjoint union of the $A_2(v)$ for $v \in L'$,
\item For each $y \in B$, let $A_3(y)$ be a set of $d_{\HH}(y)$ vertices
which are all joined to $y$ in $F$.  Let $A_3$ be the disjoint union
of the $A_3(y)$ for $y \in B$.
\end{enumerate}

Define $\func{f}{V(F)}{\IN}$ by $f(v) = 1$ for all $v \in A_1 \cup A_2 \cup A_3$ and $f(v) = 3$ for all $v \in B$.  First, suppose $\size{F} > \sum_{v \in V(F)} f(v)$.  
Then by Lemma \ref{DegenerateEuler}, $F$ has an induced subgraph $Q$ such that $d_Q(v) > f(v)$ for each $v \in V(Q)$.  
In particular, $V(Q) \subseteq B \cup A_1$ and $d_Q(v) \ge 4$ for $v \in B \cap V(Q)$ and $d_Q(v) \ge 2$ for $v \in A_1 \cap V(Q)$.  
Put $Y \DefinedAs B \cap V(Q)$ and let $X$ be $\bigcup_{T \in V(Q) \cap A_1} V(T)$. 
Now $Z \DefinedAs G[X \cup Y]$ satisfies the hypotheses of Lemma \ref{MultipleHighConfigurationEulerLopsided}, so $Z$ has an induced subgraph $G'$ that is $f$-AT 
where $f(y) = d_{G'}(y) - 1$ for $y \in Y$ and $f(v) = d_{G'}(v)$ for $v \in X$.  Since $Y \subseteq B$ and $X \subseteq V(\L)$, we have $f(v) = k-1 + d_{G'}(v) - d_G(v)$ for all $v \in V(G')$.  
Hence, $G$ is AT-reducible to $G'$, a contradiction.
Therefore $\size{F} \le \sum_{v \in V(F)} f(v) = 3\card{B} + \card{\D} + \card{A_2} + \card{A_3}$. 
By Lemma \ref{ConfigurationTypeOneEuler}, for each $y \in B$ we have $d_F(y) \ge k-1$.  
Hence $\size{F} \ge (k-1)\card{B}$.  This gives $(k-4)\card{B} \le \card{\D} + \card{A_2} + \card{A_3}$.  
Now the first inequality in the lemma follows since $B = V(\HH^-)$, $\card{A_3} = \sum_{v \in V(\HH^-)} d_{\HH}(v)$ and
\begin{align*}
\card{A_2} &= -q(\L) + \size{\HH, \L} \\
&= -q(\L) + k\card{\HH^-}+ \size{\HH^+, \L} - \sum_{v \in V(\HH^-)} d_{\HH}(v).
\end{align*}

Suppose $k \ge 7$.  Define $\func{f}{V(F)}{\IN}$ by $f(v) = 1$ for all $v \in A_2 \cup A_3$ and $f(v) = 2$ for all $v \in B \cup A_1$.  First, suppose $\size{F} > \sum_{v \in V(F)} f(v)$.  
Then by Lemma \ref{DegenerateEuler}, $F$ has an induced subgraph $Q$ such that $d_Q(v) > f(v)$ for each $v \in V(Q)$.  
In particular, $V(Q) \subseteq B \cup A_1$ and $\delta(Q) \ge 3$.  Put $Y \DefinedAs B \cap V(Q)$ and let $X$ be $\bigcup_{T \in V(Q) \cap A_1} V(T)$. 
Now $Z \DefinedAs G[X \cup Y]$ satisfies the hypotheses of Lemma \ref{MultipleHighConfigurationEuler}, so $Z$ has an induced subgraph $G'$ that is $f$-AT 
where $f(y) = d_{G'}(y) - 1$ for $y \in Y$ and $f(v) = d_{G'}(v)$ for $v \in X$.  Since $Y \subseteq B$ and $X \subseteq V(\L)$, we have $f(v) = k-1 + d_{G'}(v) - d_G(v)$ for all $v \in V(G')$.  
Hence, $G$ is AT-reducible to $G'$, a contradiction.

Therefore $\size{F} \leq \sum_{v \in V(F)} f(v) = 2(\card{B} + \card{\D}) + \card{A_2} + \card{A_3}$. 
By Lemma \ref{ConfigurationTypeOneEuler}, for each $y \in B$ we have $d_F(y) \ge k-1$.  
Hence $\size{F} \ge (k-1)\card{B}$.  This gives $(k-3)\card{B} \leq 2\card{\D} + \card{A_2} + \card{A_3}$.  
Now the second inequality in the lemma follows as before.
\end{proof}

\section{Bounding $\beta(\L)$}
This section is devoted to extracting the reusable Lemma \ref{betaLemma} from the proof of R. \cite{Better4ListCriticalBound}.

\begin{defn} A graph $G$ is \emph{OC-reducible} to $H$ if $H$ is a nonempty induced
subgraph of $G$ which is online $f_{H}$-choosable where $f_{H}(v)\DefinedAs\delta(G)+d_{H}(v)-d_{G}(v)$
for all $v\in V(H)$. If $G$ is not OC-reducible to any nonempty induced subgraph,
then it is \emph{OC-irreducible}. 
\end{defn}

\begin{lem}\label{betaLemma}
	Let $G$ be an OC-irreducible graph with $\delta(G) = k-1$.  Let $\L$ be the subgraph of $G$ induced on $(k-1)$-vertices and	$\HH$ the subgraph of $G$ induced on $k^+$-vertices.  
	If $2 \le \lambda \le \frac{6(k-1)}{k}$, then
	\[\beta(\L) \le \frac{2}{\lambda}\size{\HH} + \frac{2\size{G} - (k-2)\card{G} - \parens{\frac{k}{2} + \frac{k-1}{\lambda}}\card{\HH} - 1}{k-1}.\]
\end{lem}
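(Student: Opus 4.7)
The plan is to show that if $\beta(\L)$ is too large, then $G$ is OC-reducible to a proper induced subgraph, contradicting OC-irreducibility. Let $\L^\circ$ denote the subgraph of $\L$ induced on vertices $v$ with $d_\L(v) = k-1$ (equivalently, $v$ has no neighbor in $\HH$), and let $I$ be a maximum independent set in $\L^\circ$, so $\card{I} = \beta(\L)$. I aim to OC-reduce $G$ to $H \DefinedAs G - I$, which by definition requires showing $H$ is online $f_H$-paintable, where
\[f_H(v) \DefinedAs (k-1) + d_H(v) - d_G(v) = (k-1) - \card{N_G(v) \cap I} \quad \text{for all } v \in V(H).\]

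For online paintability I use the online kernel method: it suffices to produce a kernel-perfect orientation of $H$ with out-degree at most $f_H(v)-1$ at each $v$. The parameter $\lambda$ enters via a \emph{fractional} orientation realized as a convex combination of integer kernel-perfect orientations. Each $\HH$-edge contributes fractional out-degree $\tfrac{1}{\lambda}$ to each endpoint; each $\L$-$\HH$ edge is oriented into $\HH$; and edges inside $\L \setminus I$ are oriented using the block structure of the Gallai trees comprising $\L$ so as to fill the remaining out-degree budget at each $(k-1)$-vertex. The constraint $\lambda \ge 2$ is what keeps the $\HH$-edge split consistent (each endpoint receiving at most $\tfrac{1}{2}$ per edge in expectation), and $\lambda \le \tfrac{6(k-1)}{k}$ is precisely the threshold beyond which $\HH$-vertices could violate their out-degree bound after the $\tfrac{1}{\lambda}$-contributions. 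Kernel-perfectness of the orientation follows by a block-by-block argument, adapting the technique of R.~\cite{Better4ListCriticalBound} from $k=4$.

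Summing the out-degree inequality $d^+(v) \le f_H(v) - 1$ over $V(H)$, with $\HH$-edges counted fractionally with total weight $\tfrac{2}{\lambda}\size{\HH}$, and substituting $\size{H} = \size{G} - (k-1)\card{I}$, $\sum_v \card{N_G(v) \cap I} = (k-1)\card{I}$, and $\card{V(H)} = \card{G} - \card{I}$, yields a linear inequality in which $\card{I} = \beta(\L)$ appears linearly. Solving for $\beta(\L)$ produces the stated bound; the $\tfrac{k-1}{\lambda}\card{\HH}$ correction arises because $\HH$-vertices gain that much extra budget from the fractional splitting of their incident $\HH$-edges, and the $-1$ in the numerator reflects the requirement that $H$ be nonempty for OC-reducibility to apply.

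The main technical obstacle is proving kernel-perfectness of the constructed fractional orientation, i.e., expressing it as a convex combination of integer kernel-perfect orientations. Each block of $\L \setminus I$ is either a clique (possibly with an $I$-vertex removed) or an odd cycle minus an independent subset, both of which admit kernels under suitable orientations; these local kernels combine across the block tree of each Gallai component. The delicate step is coordinating the $\L$-internal orientation with the fractional $\HH$-orientation so that every induced subdigraph of the combined orientation has a kernel---this is the core of the R.~\cite{Better4ListCriticalBound} argument, and it is where the numerical bounds on $\lambda$ become essential.
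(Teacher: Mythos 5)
There is a genuine gap here: the engine of the paper's proof is entirely absent from your proposal, and the construction you substitute for it is asserted rather than proved and, as described, cannot work. The paper does not build any new kernel-perfect orientation in this lemma. It cites the Kernel Magic lemma of Kierstead and R.\ as a black box, namely that every OC-irreducible graph with $\delta(G)=k-1$ satisfies $2\size{G} \ge (k-2)\card{G} + \mic(G) + 1$ (this is where the $-1$ in the numerator comes from, not from ``$H$ must be nonempty''), and then the whole content of the proof is a lower bound on $\mic(G)$: take a maximum independent set in $\HH$ union the $\beta(\L)$ vertices of $\L$ all of whose $k-1$ neighbors lie in $\L$. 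The $\HH$ part contributes at least $k\alpha(C)$ per component $C$ of $\HH$, and the terms $\frac{2}{\lambda}\size{\HH}$ and $\parens{\frac{k}{2}+\frac{k-1}{\lambda}}\card{\HH}$ in the statement come from the independence bound of L{\"o}wenstein et al., $\alpha(C) \ge \frac23\card{C} - \frac13\size{C}$; the hypothesis $\lambda \le \frac{6(k-1)}{k}$ is exactly the condition $\frac{k-1}{\lambda} - \frac{k}{6} \ge 0$ needed to compare coefficients in that step. Your proposal never mentions $\alpha(\HH)$ or any independence bound, so you have no route to these terms; your explanation of the two constraints on $\lambda$ as governing a ``fractional splitting'' of $\HH$-edges is a guess that does not correspond to anything in the argument.

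The construction you do sketch---a fractional kernel-perfect orientation of $G - I$ realized as a convex combination of integer kernel-perfect orientations, with each $\HH$-edge giving out-degree $\frac{1}{\lambda}$ to both endpoints---is not a known certificate of online choosability, and you give no proof of the kernel-perfectness you acknowledge is ``the main technical obstacle.'' The logic is also inverted: you announce that a large $\beta(\L)$ forces OC-reducibility, but the reduction you attempt (to $H = G - I$ with $f_H(v) = (k-1) - \card{N_G(v)\cap I}$) uses no largeness hypothesis on $\beta(\L)$ anywhere; if your orientation existed it would exist for every OC-irreducible $G$ and yield an unconditional contradiction, which is absurd. What is salvageable in your write-up is the observation that the $\beta(\L)$ vertices have all $k-1$ of their neighbors inside $\L$; in the paper this is used not to delete them but to count the $(k-1)\beta(\L)$ edges they send into their complement as part of an independent cover, which is then fed into Kernel Magic. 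You should restructure the proof around $\mic(G)$ and import the two external results (Kernel Magic and the L{\"o}wenstein et al.\ bound) rather than re-deriving the kernel technique from scratch.
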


\begin{observation}
The hypotheses of Lemma \ref{betaLemma} are satisfied by $k$-critical, $k$-list-critical and online $k$-list-critical graphs.
\end{observation}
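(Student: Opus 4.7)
The plan is to verify the two hypotheses of Lemma~\ref{betaLemma} separately. For the minimum-degree condition $\delta(G) = k-1$, the standard fact is that any critical graph in each of the three senses has $\delta(G) \ge k-1$: a vertex $v$ of smaller degree could be removed, an (online) $(k-1)$-coloring of $G - v$ obtained from criticality of $G$, and $v$ greedily extended. When the observation is used inside the Main Theorem we may assume equality, since the claimed lower bound on $d(G)$ is strictly less than $k$ and so is immediate when $\delta(G) \ge k$.

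The substantive claim is OC-irreducibility. I would argue the contrapositive: if $G$ is $k$-critical (resp.\ $k$-list-critical, online $k$-list-critical) and OC-reducible to a nonempty induced subgraph $H$, then $G$ is $(k-1)$-colorable (resp.\ $(k-1)$-choosable, online $(k-1)$-choosable), contradicting criticality. By OC-reducibility, $H$ is online $f_H$-choosable with $f_H(v) = (k-1) + d_H(v) - d_G(v)$. If $V(H) = V(G)$ this already contradicts criticality, since then $f_H \equiv k-1$. Otherwise $G - V(H)$ is a proper induced subgraph, hence (online) $(k-1)$-choosable by criticality in the relevant sense. Given any (online) $(k-1)$-list assignment $L$ on $G$, I would first complete a proper coloring of $G - V(H)$ using $L$; each $v \in V(H)$ then sees at most $d_G(v) - d_H(v)$ forbidden colors contributed by its neighbors outside $V(H)$, so its effective list has size at least $(k-1) - (d_G(v) - d_H(v)) = f_H(v)$, and online $f_H$-choosability of $H$ extends the coloring to $V(H)$.

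The main technical point is making this ``first color $G - V(H)$, then extend to $H$'' strategy rigorous in the fully online setting, where the adversary interleaves lists on $G - V(H)$ and $V(H)$ rather than finishing $G - V(H)$ first. The standard chip-game formulation resolves this: I would run the $G - V(H)$ sub-strategy and the $H$ sub-strategy in parallel, split each adversary move $S$ into $S_1 \DefinedAs S \cap V(G - V(H))$ and $S_2 \DefinedAs S \cap V(H)$, let the $G - V(H)$ sub-strategy pick an independent $I_1 \subseteq S_1$, then feed $S_2 \setminus N_G(I_1)$ to the $H$ sub-strategy to get an independent $I_2 \subseteq V(H)$, and color $I_1 \cup I_2$ with the current color (it is independent in $G$ by construction). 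A vertex $v \in V(H)$ contributes a chip to the $H$ sub-strategy only in rounds where $v$ is marked and no already-colored neighbor outside $V(H)$ has blocked that color; since each outside neighbor of $v$ blocks at most one round, at most $d_G(v) - d_H(v)$ of $v$'s $\ge k-1$ markings are lost, leaving $\ge f_H(v)$ effective markings -- exactly matching the budget on which $H$'s winning strategy relies. This is the same splitting used for AT-reducibility in the proof of Lemma~\ref{qLemma}, and it transfers to the OC-reducibility setting without change, completing the contradiction in all three cases.
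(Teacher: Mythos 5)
Your proposal is correct: the paper states this observation without proof, and your argument---criticality implies OC-irreducibility by coloring the proper subgraph $G - V(H)$ and extending to $H$ via its online $f_H$-choosability, with the parallel strategy composition (splitting each marked set $S$ into $S_1$ and $S_2 \setminus N_G(I_1)$ and charging at most $d_G(v)-d_H(v)$ lost markings to outside neighbors) handling the fully online case---is exactly the standard reasoning the paper implicitly relies on. Your treatment of the $\delta(G)=k-1$ hypothesis (critical graphs only guarantee $\delta(G)\ge k-1$, but the case $\delta(G)\ge k$ makes the Main Theorem trivial) is also the right reading; the only stray remark is your claim that the same splitting appears in the proof of Lemma \ref{qLemma}, which it does not, but this does not affect your argument.
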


The proof of Lemma \ref{betaLemma} requires the following lemma from Kierstead and R. \cite{KernelMagic} 
that generalizes a kernel technique of Kostochka and Yancey \cite{kostochkayancey2012ore}.

\begin{TheDefinition} The \emph{maximum independent cover number }of a graph $G$
	is the maximum $\mic(G)$ of $\size{I, V(G) \setminus I}$ over all independent sets $I$
	of $G$. 
\end{TheDefinition}

\begin{KernelMagic}\label{ConsantListMicStrength} 
	Every OC-irreducible graph $G$ with $\delta(G) = k-1$ satisfies
	\[2\size{G} \ge (k-2)\card{G} + \mic(G) + 1.\]
\end{KernelMagic}

\begin{thm}[L{\"o}wenstein, et al. \cite{lowenstein2011independence}]\label{Lowenstein}
If $G$ is a connected graph, then
\[\alpha(G) \ge \frac23\card{G} - \frac14\size{G} - \frac13.\]
\end{thm}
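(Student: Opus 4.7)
My plan is induction on $\card{G}$. The base case $\card{G}=1$ is immediate, since $\alpha(G)=1\ge\tfrac{2}{3}-\tfrac{1}{3}=\tfrac{1}{3}$. For the inductive step I would rewrite the target as $12\alpha(G)+3\size{G}+4\ge 8\card{G}$, pick a vertex $v$ of minimum degree $\delta(G)$, select a small subset $S\subseteq V(G)$ to place into the independent set, delete $S\cup N_G(S)$, and apply the inductive hypothesis to each connected component of what remains. Each component contributes a constant $-\tfrac{1}{3}$ of slack to the lower bound, so the component count of the reduced graph must be paid for by the $+\tfrac{1}{4}$ gained per deleted edge.

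When $\delta(G)\ge 3$, we have $2\size{G}\ge 3\card{G}$, so the right-hand side of the target is at most $\tfrac{7\card{G}}{24}-\tfrac{1}{3}$; this is weak enough that the Caro--Wei bound $\alpha(G)\ge\sum_{v\in V(G)}\frac{1}{d_G(v)+1}$ finishes the job directly. When $\delta(G)=2$, I would pick $v$ of degree two, include $v$ in the independent set, delete $N_G[v]$, and induct on the components of the remainder; connectedness of $G$ caps the component count by the number of edges from $N_G(v)$ to $V(G)\setminus N_G[v]$, and the arithmetic closes except possibly in the triangle subcase $u_1u_2\in E(G)$ (where $N_G(v)=\{u_1,u_2\}$), which is handled separately by accounting for the extra deleted edge.

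The main case is $\delta(G)=1$, and this is where I expect the main obstacle. I would fix a leaf $v$ with neighbor $u$ and let $P$ be the set of all leaves at $u$, of size $\ell$. The primary reduction places all of $P$ into the independent set, deletes $P\cup\{u\}$, and applies the inductive hypothesis to each connected component of the remainder. Writing $d\DefinedAs d_G(u)$ and observing that connectedness of $G$ forces the component count $c\le d-\ell$, the induction step reduces to the inequality $3d+4\ell\ge 4c+4$, which holds whenever $\ell\ge(d+4)/8$. When $d\le 4$ the simpler reduction of deleting just $\{u,v\}$ and inducting on the components of $G-u-v$ works. The delicate subcase is $d\ge 5$ with very few leaves at $u$; there I would iterate by applying the induction at a minimum-degree vertex in a non-pendant component of $G-u$, or chain reductions across several branches at $u$, keeping the balance between the $-\tfrac{c}{3}$ component-count loss and the $+\tfrac{d}{4}$ edge-deletion gain precisely tight. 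The constants $\tfrac{2}{3}$, $\tfrac{1}{4}$, $\tfrac{1}{3}$ in the stated bound are exactly what makes this bookkeeping close.
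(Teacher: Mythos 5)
The paper does not prove this statement at all --- it is imported as a black box from L{\"o}wenstein et al.\ \cite{lowenstein2011independence} and only the weakening in Corollary \ref{AlphaBound} is derived from it --- so your argument has to stand on its own, and as written it does not: each of your three branches has a hole. In the branch $\delta(G)\ge 3$, Caro--Wei does \emph{not} finish the job: for a connected cubic graph Caro--Wei gives exactly $\card{G}/4$, while the target is $\frac{7}{24}\card{G}-\frac13$, which exceeds $\card{G}/4$ as soon as $\card{G}>8$; already for the Petersen graph Caro--Wei gives $2.5$ against a target of $\frac{31}{12}\approx 2.58$, and for large cubic graphs the shortfall grows linearly. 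Graphs of minimum degree about $3$ are exactly where an edge coefficient of $\frac14$ bites hardest, so this case is the heart of the theorem, not a triviality to outsource. In the branch $\delta(G)=2$ the arithmetic you claim ``closes'' does not: deleting $N_G[v]$ with $u_1u_2\notin E(G)$ removes $t+2$ edges, where $t$ is the number of edges from $\set{u_1,u_2}$ to $V(G)\setminus N_G[v]$, and each of the $c$ components of the remainder costs $\frac13$, so the step needs $\frac{t}{4}-\frac{c}{3}\ge\frac16$, i.e.\ $3t\ge 4c+2$; the bound $c\le t$ from connectedness gives nothing of the sort. Concretely, if $v$'s two nonadjacent neighbors are each joined by a single edge to their own triangle, then $t=c=2$ and your reduction delivers only $1+2\cdot\frac{11}{12}=\frac{17}{6}$, short of the target $\frac{19}{6}$ (the theorem holds there with $\alpha=4$, realized by an independent set \emph{avoiding} $v$ --- so the reduction must sometimes not put the minimum-degree vertex into the independent set).

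Finally, in the branch $\delta(G)=1$ you explicitly leave the subcase $d_G(u)\ge 5$ with few leaves unresolved; ``iterate \dots keeping the balance precisely tight'' is a plan, not an argument, and since the constants $\frac23,\frac14,\frac13$ make every step tight, this is precisely where a proof is required. So the proposal is not yet a proof of the cited theorem. For the purposes of this paper it suffices to cite \cite{lowenstein2011independence}; if you want a self-contained argument, you need a genuinely stronger tool than Caro--Wei for minimum degree $3$ and a degree-$2$ reduction whose choice of vertex to include is adaptive, which is substantially more delicate than what you sketch.
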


\begin{cor}\label{AlphaBound}
If $G$ is a connected graph, then
\[\alpha(G) \ge \frac23\card{G} - \frac13\size{G}.\]
\end{cor}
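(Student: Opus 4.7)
The plan is to deduce this corollary directly from Theorem \ref{Lowenstein} and then dispose of the remaining small cases by inspection. Write $n \DefinedAs \card{G}$ and $m \DefinedAs \size{G}$. Theorem \ref{Lowenstein} gives $\alpha(G) \ge \tfrac{2}{3}n - \tfrac{1}{4}m - \tfrac{1}{3}$, while the desired inequality is $\alpha(G) \ge \tfrac{2}{3}n - \tfrac{1}{3}m$. The first step is a one-line algebraic comparison: the bound from Theorem \ref{Lowenstein} is at least the target bound precisely when $\tfrac{m}{12} \ge \tfrac{1}{3}$, that is, $m \ge 4$. So for every connected graph $G$ with $\size{G} \ge 4$ the corollary follows immediately from Theorem \ref{Lowenstein}.

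The second step handles $\size{G} \le 3$. Since $G$ is connected, $m \ge n-1$, so $n \le m + 1 \le 4$, which leaves only the finite list $K_1, K_2, P_3, P_4, K_{1,3}, K_3$. I would then just compute $\alpha$ and the right-hand side in each case and check the inequality by hand. A slightly slicker variant avoids enumerating: for $m \le 3$ one has
\[
\tfrac{2}{3}n - \tfrac{1}{3}m \;\le\; \tfrac{2}{3}(m+1) - \tfrac{m}{3} \;=\; \tfrac{m+2}{3} \;\le\; \tfrac{5}{3},
\]
so it suffices to show $\alpha(G) \ge 2$. This fails only when $G$ is complete, i.e.\ $G \in \set{K_1, K_2, K_3}$, and those three are verified directly (with equality holding for $K_2$ and $K_3$).

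There is no real obstacle: this corollary is a routine weakening of Theorem \ref{Lowenstein}, obtained by trading the additive constant $-\tfrac{1}{3}$ for a worse coefficient on $m$, and the only nontrivial point is to confirm the inequality still holds on the handful of small connected graphs where the original $-\tfrac{1}{3}$ is doing real work.
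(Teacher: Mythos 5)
Your proof is correct and follows essentially the same route as the paper: apply Theorem \ref{Lowenstein} directly when $\size{G} \ge 4$ and check the small connected graphs with at most three edges by hand. In fact your case analysis is slightly more careful than the paper's, which lists only $K_1$, $K_2$, $P_3$, $K_3$ and omits $P_4$ and $K_{1,3}$ (both harmless, as your reduction to $\alpha(G) \ge 2$ for non-complete graphs makes clear).
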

\begin{proof}
By Theorem \ref{Lowenstein},
\[\alpha(G) \ge \frac23\card{G} - \frac13\size{G} + \frac{1}{12}\size{G} - \frac13,\]
so, the corollary holds if $\frac{1}{12}\size{G} \ge \frac13$.  If not, then $\size{G} < 4$, so $G$ is
$K_1$, $K_2$, $P_3$ or $K_3$ which all satisfy the desired bound.
\end{proof}

\begin{proof}[Proof of Lemma \ref{betaLemma}]
Fix $\lambda$ with $2 \le \lambda \le \frac{6(k-1)}{k}$. Let $M$ be the maximum of $\size{I, V(G) \setminus I}$ over all independent sets $I$ of $G$ with $I \subseteq \HH$.   Since the vertices in $\L$ with $k-1$ neighbors in $\L$ have no neighbors in $\HH$,
	\begin{equation}
		\mic(G) \ge M + (k-1)\beta(\L).\label{eq0}
	\end{equation}
\claim{1}{If $C$ is a component of $\HH$, then \[k\alpha(C) \ge \parens{\frac{k}{2} + \frac{k-1}{\lambda}}\card{C} - \parens{\frac{2(k-1)}{\lambda}}\size{C}.\]}

First, suppose $\size{C} < \card{C}$.  Then $\size{C} = \card{C} - 1$ and $C$ is a tree.  If $\card{C} \ge 2$, then
	\begin{align*}
		k\alpha(C) &\ge k\frac{\card{C}}{2}\\
		&\ge \parens{\frac{k}{2} - \frac{k-1}{\lambda}}\card{C} + \frac{2(k-1)}{\lambda}\\
		&=\parens{\frac{k}{2} + \frac{k-1}{\lambda}}\card{C} - \parens{\frac{2(k-1)}{\lambda}}\parens{\card{C} - 1}\\
		&=\parens{\frac{k}{2} + \frac{k-1}{\lambda}}\card{C} - \parens{\frac{2(k-1)}{\lambda}}\size{C}.
	\end{align*}
If instead, $\card{C} = 1$, then $k\alpha(C) = k \ge \parens{\frac{k}{2} + \frac{k-1}{\lambda}} = \parens{\frac{k}{2} + \frac{k-1}{\lambda}}\card{C} - \parens{\frac{2(k-1)}{\lambda}}\size{C}$ since $\lambda \ge 2$.

So, we may assume $\size{C} \ge \card{C}$.  Applying Corollary \ref{AlphaBound}, we conclude
	\begin{align*}
		k\alpha(C) &\ge \frac{2k}{3}\card{C} - \frac{k}{3}\size{C} \\
		&= \parens{\frac{k}{2} + \frac{k-1}{\lambda}}\card{C} - \parens{\frac{2(k-1)}{\lambda}}\size{C} +  \parens{\frac{k}{6} - \frac{k-1}{\lambda}}\card{C} - \parens{\frac{k}{3} - \frac{2(k-1)}{\lambda}}\size{C} \\
		&= \parens{\frac{k}{2} + \frac{k-1}{\lambda}}\card{C} - \parens{\frac{2(k-1)}{\lambda}}\size{C} +  \parens{\frac{k-1}{\lambda} -\frac{k}{6}}\card{C}\\
		&\ge \parens{\frac{k}{2} + \frac{k-1}{\lambda}}\card{C} - \parens{\frac{2(k-1)}{\lambda}}\size{C},
	\end{align*}
	where in the final inequality we used $\lambda \le \frac{6(k-1)}{k}$.
	
\smallskip

\noindent\claim{2}{Lemma \ref{betaLemma} is true.}

	Summing the bound in Claim 1 over all components of $\HH$ and plugging into \eqref{eq0} gives
	\begin{equation}
	 \mic(G) \ge \parens{\frac{k}{2} + \frac{k-1}{\lambda}}\card{\HH} - \parens{\frac{2(k-1)}{\lambda}}\size{\HH} + (k-1)\beta(\L).
	 \label{eq19}
	\end{equation}
	Applying Kernel Magic using \eqref{eq19} and solving for $\beta(\L)$ proves the claim.
\end{proof}

\bibliographystyle{amsplain}
\bibliography{GraphColoring1}
\end{document}